\newtheorem{theorem}{Theorem}[section]
\newtheorem{proposition}[theorem]{Proposition}
\newtheorem{corollary}[theorem]{Corollary}
\newtheorem{definition}{Definition}
\newtheorem{remark}{Remark}
\newtheorem{conjecture}{Conjecture}
\newtheorem{question}{Question}
\numberwithin{equation}{section}
\DeclareMathOperator{\sgn}{sgn}
\DeclareMathOperator{\Det}{Det}
\DeclareMathOperator{\Trace}{Trace}
\title{Measure Equipartitions via Finite Fourier Analysis}
\author{Steven Simon}
\begin{document}

\maketitle

\begin{abstract} 
      
Applications of harmonic analysis on finite groups are introduced to measure partition problems, with equipartitions obtained as the vanishing of prescribed Fourier transforms. For elementary abelian groups $Z_p^k$, $p$ an odd prime, equipartitions are by $k$-tuples of complex regular $p$-fans in $\mathbb{C}^d$, analogues of the famous Gr\"unbaum problem on equipartitions in $\mathbb{R}^d$ by $k$-tuples of hyperplanes (i.e., regular 2-fans). Here the number of regions is a prime power, as usual in topological applications to combinatorial geometry. For general abelian groups, however, the Fourier perspective yields new classes of equipartitions by families of complex regular fans $F_{q_1},\ldots, F_{q_k}$ (such as those of a ``Makeev-type"), including when the number of regions is not a prime power. 

\end{abstract}

\section{Introduction} 

	A common problem in combinatorial and computational geometry concerns equipartitions of measures on Euclidian space. Given any collection of absolutely continuous measures $\mu_1,\ldots, \mu_m$ on $\mathbb{R}^d$ (simply to be called \textit{measures} from now on), one seeks a partition $\{\mathcal{R}_1,\ldots, \mathcal{R}_n\}$ of $\mathbb{R}^d$ by a fixed class of ``nice" geometric regions, each of which contains an equal fraction of each total measure. The most famous such problem, dating back to Gr\"unbaum [10], asks for the smallest dimension $d=\Delta(m,k)$ for which any $m$ measures on $\mathbb{R}^d$ can be equipartitioned into $2^k$ orthants determined by $k$ hyperplanes (see, e.g., [3, 5, 11, 17, 21, 31]). In particular, the widely applied Ham Sandwich Theorem -- any $d$ measures on $\mathbb{R}^d$ can be simultaneously bisected by a single hyperplane -- is $\Delta(d,1)=d$.
	
	Equipartitions are ordinarily obtained topologically. Owing to some, often implicit group symmetry on each partition, the problem can be reduced to an equivariant framework to which the vast machinery of classical algebraic topology -- equivariant cohomology and index theory, characteristic and other obstruction classes, spectral sequences, cobordism theory, and so on -- can be applied. See, e.g., [18, 32--34], for a general survey of these methods. 
		
\subsection{A Harmonic Analysis Approach} The central objective of this paper is to propose more systematic application of  Fourier analysis on finite groups to these measure equipartition problems and their topological reductions (see [23] for its use in the Topological Tverberg Problem, another central problem in combinatorial geometry, and [25] for an application of compact groups to transversality-type theorems).  For given $G$, we consider a class of naturally indexed partitions $\{\mathcal{R}_g\}_{g\in G}$ by convex domains. If each $\mu_i$ is \textit{complex-valued} (i.e., a pair of real measures) any decomposition determines maps $\mathcal{F}_{\mu_i}: G\rightarrow \mathbb{C}$, $g\mapsto \mu_i(\mathcal{R}_g)$, with  Fourier expansions (see, e.g., [26]) \begin{equation} \mu_i(\mathcal{R}_g) = \sum_{\sigma \in \hat{G}}n_\sigma\, \Trace(c_{i,\sigma} \sigma_g), \end{equation} where $\hat{G}$
consists of all non-isomorphic irreducible  unitary representations of $G$,  $n_\sigma$ is the dimension of the representation, and the  $c_{i,\sigma}$ are the matrix-valued Fourier transforms \begin{equation} c_{i,\sigma}=\frac{1}{|G|}\sum_{g\in G} \mu_i(\mathcal{R}_g)\sigma_g^{-1} \in M(\mathbb{C},n_\sigma)\end{equation}	

	 Our measure partitions are obtained as the vanishing of prescribed transforms of the above expansions.  As discussed in Section 2, these partitions are examples of a purely group-theoretic family of generalizations of the Ham Sandwich Theorem, first introduced in [24], whereby measures are ``balanced" by a group's linear representations.  Many partition problems previously considered can be put in this ``$G$-Ham Sandwich" context, notably the Gr\"unbaum problem above.
	 
	 Owing to this representation theory setting, it is not surprising that our results are obtained via the usual computational methods of topological combinatorics -- here, the calculation of the total Chern class in group cohomology of the considered representation (Section 7) -- nor is it surprising that the equivariant topological techniques themselves (e.g., ideal-valued index theory as used in [3, 17, 32, 34]) can be recast in the language of Fourier analysis. Nonetheless, the emphasis on Fourier transforms introduces a  novel perspective which opens new possibilities for applications, as we now discuss. 

\subsection{Summary of Results} We shall be primarily concerned (Section 3--5) with finite abelian groups $G=Z_{q_1}\times\ldots \times Z_{q_k}$ and partitions of $\mathbb{C}^d$ by complex regular fans $F_{q_1},\ldots, F_{q_k}$. For the elementary abelian groups $G=Z_p^k$, $p$ an odd prime, one has equipartitions by $k$-tuples of complex regular $p$-fans with complex affine independent centers (Theorem 4.1, Corollaries 4.1--4.2), analogues of the Gr\"unbaum problem above (i.e., equipartitions by regular 2-fans). In these cases, the number of regions is a prime power, as is typical for equivariant topological applications to measure partition theory (see, e.g., [2, 3, 4, 14, 16, 17, 28, 31], et cetera). 

	The main advantage of the Fourier method is the great latitude in choice of transforms to be annihilated, so that, for general $G$, judicious selections yield a wider variety of equipartitions than those previously considered. This is exemplified by the ``Makeev-type" results of Section 5, which include when the number of regions is no longer a prime power: if $q_j=pr_j$ for an odd prime $p$, one has a complex regular fan partition $F_{q_1},\ldots, F_{q_k}$ such that each of the $r=r_1\cdots r_k$ sub $k$-tuples of regular $p$-fans equipartitions each measure  (Theorem 5.1, Corollaries 5.1--5.2). Moreover, Proposition 5.1 gives a case where the equipartitions occur by pairs of fans with distinct $q_1$ and $q_2$. Finally, a non-abelian example is provided in Section 6 for the quaternion group $Q_8$. 
	
\section{$G$-Ham Sandwich Theorems} 

As above, let $G$ be a finite group, which we suppose gives rise to class of partitions $\mathbb{R}^d=\cup_g \mathcal{R}_g$ by convex domains which are invariant under a free $G$-action. These partitions can be called ``regular," since they often arise as conical partitions associated to regular convex polytopes. Given any representation $\rho: G\rightarrow O(n)$ and any $n$-tuple $\mu=(\mu_1,\ldots, \mu_n)$ of measures on $\mathbb{R}^d$, one can then consider the ``$(\rho, G)$-average"
 		 \begin{equation} \frac{1}{|G|} \sum_{g\in G} \rho_g^{-1}(\mu(\mathcal{R}_g)) =\frac{1}{|G|} \sum_{g\in G} g^{-1}\cdot \mu(\mathcal{R}_e\cdot g) \in \mathbb{R}^n \end{equation}  of the measures of the regions of any such $G$-decomposition. In a general sense, the sum evaluates  the symmetry of the measures of the $\{\mathcal{R}_g\}_{g\in G}$ with respect to the given representation, so we say that 
		 
\begin{definition} A $n$-tuple of measures $\mu=(\mu_1,\ldots, \mu_n)$ is $``(\rho,G)$-balanced" by the partition $\{\mathcal{R}_g\}_{g\in G}$ if the average (2.1) is zero.\end{definition} 

By a ``$(\rho,G)$-Ham Sandwich Theorem," we mean a result which guarantees that any $n$-tuple of measures on $\mathbb{R}^d$ can be simultaneously balanced by the representation $\rho$. The annihilation of Fourier coefficients in (1.1) is a unitary case of this construction. In particular, \textit{unitary abelian Ham Sandwich Theorems are equivalent to the vanishing of prescribed Fourier transforms}, since any unitary representation of an abelian group $G$ is the direct sum of 1-dimensional ones (see, e.g., [22]) and the balancing of $\mu_i$ by $\sigma: G\rightarrow U(1)$ means $c_{i,\sigma}=0$. For non-abelian groups, the vanishing of $c_{1,\sigma_1}, \ldots, c_{k,\sigma_k}$ is the $(\rho,G)$-balancing of the $n:=\sum_i n^2_{\sigma_i}$-tuple $\mu=(\mu_1\textbf{e}_1,\ldots, \mu_1\textbf{e}_{n_{\sigma_1}},\ldots, \mu_k\textbf{e}_{n_{\sigma_k}})$ by $\rho=\oplus_{i=1}^k n_{\sigma_i}\sigma_i$, where $\textbf{e}_i\in \mathbb{C}^n$ is the $i$-th standard basis vector (and the trivial measure is denoted by 0).

Before giving new equipartitions, we first show how the Gr\"unbaum Problem is a $Z_2^k=\{\pm1\}^k$-case of this $G$-Ham Sandwich/Fourier partition scheme. 
	
\subsection{The Gr\"unbaum Problem}  As in [3, 5], we consider the standard $Z_2^k$-action on $(S^d)^k$. Each $x_j=(\mathbf{a}_j,b_j)\in S^d$, $\|\mathbf{a}_j\|^2+|b_j|^2=1$,  gives a unique hyperplane $H_j=  \{\mathbf{u}\in\mathbb{R}^d\mid \langle \mathbf{u}, \mathbf{a}_j \rangle = b_j\}$ if $\mathbf{a}_j\neq 0$ and a hyperplane ``at infinity" otherwise. The $Z_2^k$-orbits $\{\delta\cdot x\}_{\delta\in Z_2^k}$ of all $x=(x_1,\ldots, x_k)\in (S^d)^k$ therefore produce all the partitions of $\mathbb{R}^d$ by the (not necessarily distinct, some possibly empty) regions  $\mathcal{O}_\delta = \{\mathbf{u}\in\mathbb{R}^d\mid \, (\forall\, 1\leq j \leq k)\,(\exists \, v_j\geq0)\, \langle \mathbf{u},\mathbf{a}_j\rangle - b_j = \delta_jv_j\}$ determined by $k$ or fewer (genuine) hyperplanes in $\mathbb{R}^d$, $\delta=(\delta_1,\ldots, \delta_k) \in Z_2^k$.
	
	On the other hand, $\hat{Z_2^k}=\mathbb{Z}_2^k$ and each $\chi_\epsilon: Z_2^k\rightarrow U(1)$ is real, so each $\mu_i(\mathcal{O}_\delta)=\sum_{\epsilon\in \mathbb{Z}_2^k} c_{i,\epsilon} \chi_\epsilon$ is real-valued if the measures are. As each transform $c_{i,0}$ of the trivial representation is $2^{-k}\sum_\delta \mu_i(\mathcal{O}_\delta) = 2^{-k}\mu_i(\mathbb{R}^d)$, annihilating all other $c_{i,\epsilon}$ -- i.e., $(\rho,Z_2^k)$-balancing the $m(2^k-1)$-tuple $\mu=(\mu_1,\ldots,\mu_1,\ldots, \mu_m,\ldots, \mu_m)$ by $\rho=m\oplus_{\epsilon\neq0}\chi_\epsilon$ -- is  the equipartition of each $\mu_i$ by what must therefore be $2^k$ distinct orthants of $k$ hyperplanes in general position. It should be observed that the use of $\rho$ is equivalent to that of the regular representation $\mathbb{R}^m[Z_2^k]$ in the usual topological reduction of this problem (see, e.g., [3, 5, 17]), but ignores the full Weyl-group $Z_2^k\rtimes S_k$ action on $(S^d)^k$ also considered there, $S_k$ being the symmetric group. Thus the newer $k=2$ results $\Delta(2^{n+2}+2, 2)=3\cdot 2^{n+1} +2$ of [31], which arise from an effective use of the dihedral group $D_8$ (also used in [5]), are not recovered by our construction.

\section{Fourier Partitions by Complex Regular Fans}

 	Suppose now that $G= Z_{q_1}\times\ldots\times Z_{q_k}$ is an arbitrary finite abelian group, where each cyclic group $Z_q$ is identified with the $q$-th roots of unity  $\{\zeta_q^k\}_{k=0}^{q-1}$, $\zeta_q=\exp(2\pi i/q)$. A natural class of partitions here are by $k$ or fewer complex regular fans $F_{q_1},\ldots, F_{q_k}$ in $\mathbb{C}^d$, where each $F_q$ is the union of $q$ half-hyperplanes whose successive dihedral angles are all equal to $2\pi/q$ and have as their boundaries a common complex hyperplane.  
	
	Explicitly, let $R_r(q)=\{v\in \mathbb{C}\mid \arg (v) \in [(r-1)/q, (r+1)/q]\}$ denote the regular $q$-sectors of $\mathbb{C}$ centered at the origin, $0\leq r<q$, and let $\langle \mathbf{u},\mathbf{v} \rangle_{\mathbb{C}}= \sum_{i=1}^d u_i\bar{v}_i$ be the standard Hermitian form on $\mathbb{C}^d$. If $\mathbf{a}_j\neq 0$, the $Z_{q_j}$-orbit $\{\zeta_{q_j}^r x_j\}_{j=0}^{r-1}$ of $x_j=(\mathbf{a}_j,b_j)\in S(\mathbb{C}^{d+1})$, $\|\mathbf{a}_j\|^2 +|b_j|^2=1$, partitions $\mathbb{C}^d$ into the sectors $\mathcal{S}_{r_j}(q_j) = \{\mathbf{u}\in\mathbb{C}^d\mid (\exists v_j\in R_{r_j}(q_j))\, \langle \mathbf{u},\mathbf{a}_j \rangle_{\mathbb{C}} - \bar{b}_j = v_j\}$ of the complex regular $q_j$-fan 
\begin{equation} F_{q_j} =  \{\mathbf{u}\in\mathbb{C}^d\mid (\exists \, 0\leq r <q_j )\, \langle \mathbf{u},\mathbf{a}_j \rangle_{\mathbb{C}} - \bar{b}_j = \zeta_{q_j}^r \} \end{equation} 
\noindent centered about the complex hyperplane $H^{\mathbb{C}}_j=\{\mathbf{u}\in\mathbb{C}^d\mid \langle \mathbf{u},\mathbf{a}_j \rangle_{\mathbb{C}} = \bar{b}_j\}$, and we say as before that $F_{q_j}$  is a centered ``at infinity" if $\mathbf{a}_j=0$. Thus the $G$-orbits of $[S(\mathbb{C}^{d+1})]^k$ under the standard action yield all partitions $\{\mathcal{R}_g:=\cap_{j=1}^k \mathcal{S}_{r_j}(q_j)\}_{g\in G}$ of $\mathbb{C}^d$ by all $F_{q_1},\ldots, F_{q_k}$. We make the following definition:  

\begin{definition} A collection $F_{q_1},\ldots, F_{q_k}$ in $\mathbb{C}^d$ is called a non-trivial complex fan partition if at least one of the fans is not centered at infinity.\end{definition} 
  
 	On the other hand, one has the identification $\hat{G}=\oplus_{j=1}^k\mathbb{Z}_{q_j}$, given explicitly by $\chi_{\mathbf{\epsilon}}(g)= \Pi_{j=1}^k \zeta_{q_j}^{\epsilon_j}$, $\epsilon=(\epsilon_1,\ldots, \epsilon_k)\in \oplus_{j=1}^k\mathbb{Z}_{q_j}$, so the Fourier expansion (1.1) takes the simple form \begin{equation} \mu_i(\mathcal{R}_g)=\sum_{\epsilon \in  \oplus_{j=1}^k \mathbb{Z}_{q_j}} c_{i,\epsilon} \chi_\epsilon(g) \end{equation}

\noindent One then has the following $G$-Ham Sandwich theorem: 

\begin{theorem} Let $\rho=\oplus_{r=1}^n \chi_{\epsilon_r}:G\rightarrow U(n)$, $\epsilon_r=(\epsilon_{r,1},\ldots, \epsilon_{r,k}) \in \oplus_{j=1}^k \mathbb{Z}_{q_j}$. If \begin{equation} f(b_1,\ldots, b_k)=\Pi_{r=1}^n(\epsilon_{r,1}b_1+\ldots + \epsilon_{r,k}b_k)\in \mathbb{Z}[b_1,\ldots, b_k]/(q_1b_1,\ldots, q_kb_k) \end{equation}  is not contained in the ideal \, $\mathcal{I}= (b_1^{d+1}, \ldots, b_k^{d+1})$, then for any complex measures $\mu_1,\ldots,\mu_n$ on $\mathbb{C}^d$, there exists a non-trivial complex fan partition $F_{q_1},\ldots, F_{q_k}$ with $c_{i,\epsilon_i}=0$ in (3.2) for each $1\leq i \leq n$.  \end{theorem}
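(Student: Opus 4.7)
The plan is to apply the standard configuration-space/test-map scheme: the hypothesis $f \notin \mathcal{I}$ gets translated into the non-vanishing of a top Chern class, which obstructs an otherwise hypothetical nowhere-zero $G$-equivariant map and therefore forces the existence of a non-trivial fan equipartition.

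First I would take the configuration space $X = [S(\mathbb{C}^{d+1})]^k$ with its free coordinatewise $G$-action, whose orbits already parametrize all the partitions of $\mathbb{C}^d$ by complex regular fans in (3.1). The natural test map
\[
\Psi : X \longrightarrow V := \bigoplus_{r=1}^n \mathbb{C}_{-\epsilon_r}, \qquad \Psi(x) = (c_{1,\epsilon_1}(x),\ldots,c_{n,\epsilon_n}(x))
\]
sends each $x$ to the tuple of Fourier transforms (2.2) of the partition $\{\mathcal{R}_g(x)\}_{g \in G}$; a direct change-of-variables confirms that $\Psi$ is $G$-equivariant, so zeros of $\Psi$ correspond precisely to partitions satisfying the desired vanishing. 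Assume for contradiction that no non-trivial such partition exists. Then $\Psi$ is nowhere-zero on the open stratum $X^\circ = \{x : \mathbf{a}_j \neq 0 \text{ for some } j\}$, while on the trivial stratum $T := X \setminus X^\circ \cong (S^1)^k$ the partition collapses to a single nonempty region and one computes directly that $|c_{i,\epsilon_i}(x)| = |\mu_i(\mathbb{C}^d)|/|G|$; after a harmless perturbation ensuring $\mu_i(\mathbb{C}^d) \neq 0$, $\Psi$ is nowhere-zero on all of $X$. This would produce a nowhere-zero section of the complex vector bundle $E := X \times_G V \to X/G$ and force its top Chern class $c_n(E) \in H^{2n}(X/G;\mathbb{Z})$ to vanish.

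I would then compute $c_n(E)$ directly. Identifying $X/G = \prod_j L^{2d+1}(q_j)$, letting $p_j$ denote the $j$-th projection and $\ell_j \to L^{2d+1}(q_j)$ the canonical line bundle, and setting $b_j := c_1(p_j^*\ell_j) \in H^2(X/G;\mathbb{Z})$, the factorization $\chi_{\epsilon_r} = \prod_j \chi_j^{\epsilon_{r,j}}$ gives $E = \bigoplus_r L_r$ with $L_r = \bigotimes_j p_j^*\ell_j^{\otimes \epsilon_{r,j}}$, so $c_1(L_r) = \sum_j \epsilon_{r,j}b_j$ and
\[
c_n(E) = \prod_{r=1}^n c_1(L_r) = f(b_1,\ldots,b_k).
\]
Since the subring of $H^*(X/G;\mathbb{Z})$ generated by the $b_j$ is exactly $\mathbb{Z}[b_1,\ldots,b_k]/(q_jb_j, b_j^{d+1})$ (by the standard lens-space cohomology and K\"unneth), the hypothesis $f \notin \mathcal{I}$ says $c_n(E) \neq 0$, contradicting its vanishing.

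The step I expect to be trickiest is the handling of the trivial stratum $T$. For generic measures with all $\mu_i(\mathbb{C}^d) \neq 0$ the argument goes through directly, but in general one must either perturb and extract a limit (using compactness of $X$, and arguing that the limit equipartition does not degenerate onto $T$), or bypass $T$ altogether by using that its quotient $T/G$ has real codimension $2dk \geq 2n$ in $X/G$, so the restriction $H^{2n}(X/G) \to H^{2n}(X^\circ/G)$ is injective; the borderline case $n = dk$ requires a short Gysin-sequence check that $f \notin \mathcal{I}$ is strong enough to keep $c_n(E)$ from lying in the image of the Gysin pushforward from $T/G$.
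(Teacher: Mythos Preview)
Your overall strategy is exactly the paper's: configuration space/test map reduction followed by the Chern class computation $c_n(E)=f(b_1,\dots,b_k)$ in the tensor subring of $H^*(\overline X;\mathbb{Z})$, with the hypothesis $f\notin\mathcal I$ giving $c_n(E)\neq 0$. That part is fine.

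The gap is continuity. Your $\Psi$ is \emph{not} continuous on all of $[S(\mathbb{C}^{d+1})]^k$: whenever some coordinate $(\mathbf a_j,b_j)$ lies in $\{0\}\times Z_{q_j}$, the value $-\bar b_j$ sits on a sector boundary of $R_\bullet(q_j)$, and as $b_j$ crosses that boundary the nonempty region jumps from one $g$-index to another, so $c_{i,\epsilon_i}(x)$ jumps. These bad points are \emph{not} confined to your trivial stratum $T=(0\times S^1)^k$; they occur whenever \emph{one} factor lands in $\{0\}\times Z_{q_j}$, regardless of the others, so they lie inside $X^\circ$ as well. Thus neither your perturbation of the $\mu_i$ nor your codimension/Gysin bypass of $T$ repairs the argument, because you still lack a continuous section over $X^\circ/G$.

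The paper's fix is much lighter than anything you propose: simply delete the finite set $\{0\}\times Z_{q_j}$ from each sphere and work on $X=\prod_j X_j$ with $X_j=S(\mathbb{C}^{d+1})\setminus Z_{q_j}$. On this $X$ every $F_{q_j}(x)$ is either a genuine fan or empty, the boundaries have measure zero, and dominated convergence gives continuity of $\Psi$. The quotient $\overline X_j=L^{2d+1}(q_j)\setminus\{pt\}$ deformation retracts onto the $2d$-skeleton of $BZ_{q_j}$, so the kernel of $i^*$ on the tensor subring is exactly $(b_1^{d+1},\dots,b_k^{d+1})$ and your Chern class computation goes through verbatim. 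With the bad points removed, the non-triviality of the resulting partition is also immediate: if all $\mathbf a_j=0$ then (since each $b_j\notin Z_{q_j}$) exactly one region is $\mathbb{C}^d$ and the rest are empty, giving $\Psi(x)=\rho_{g_0}^{-1}(\mu(\mathbb{C}^d))\neq 0$ for $\mu\neq 0$ --- no perturbation, no limit extraction, no Gysin sequence needed.
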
	

	We defer the proof of Theorem 3.1 to Section 7, preferring instead to first give applications to real measures.  For now, we note that the theorem reduces to Proposition 7.1 on equivariant maps, itself proved by a calculation of the top Chern class of the given representation.

\section{Complex Gr\"unbaum Problems} 

As a real hyperplane is a regular 2-fan, one has a natural complex generalization of the classical Gr\"unbaum problem: 
	
\begin{question} What is the minimum $d=\Delta_{\mathbb{C}}(m;q_1,\ldots, q_k)$, denoted $\Delta_{\mathbb{C}}(q;m,k)$ if  $q_j=q$ for all $j$, for which any $m$ measures on $\mathbb{C}^d$ can be equipartitioned by $Q=\Pi_{j=1}^kq_j$ regions determined by $k$ complex regular $q_j$-fans? \end{question}	

	In the original Gr\"unbaum problem, the lower bound $\Delta(m,k)\geq m(2^k -1)/k$ (shown in [21], conjectured there to be optimal, and established as such in [17, 21, 31] in a number of cases) follows by considering $m$ disjoint segments on the moment curve $M=\{(t,t^2,\ldots, t^d)\mid t\in\mathbb{R}\}$: $k$ equipartitioning  hyperplanes give $m(2^k-1)$ points of intersection with $M$, hence $m(2^k-1)$ roots to $k$ polynomials of degree $d$, so $kd\geq m(2^k-1)$. A similar approach using points on the complex moment curve $M_{\mathbb{C}}=\{(z,z^2,\ldots, z^d)\mid z\in \mathbb{C}\}$ gives a lower bound here, at least when $k=1$:

\begin{proposition} For $q>2$, \begin{equation} \Delta_{\mathbb{C}}(q;1,m) \geq m\lfloor (q-1)/2\rfloor  \end{equation}  \end{proposition}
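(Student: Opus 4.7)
The plan is to mimic the classical moment-curve lower bound for the real Gr\"unbaum problem. Set $N=\lfloor(q-1)/2\rfloor$. For each $i=1,\dots,m$ I will take $\mu_{i}$ to be the uniform mass on the union of $N$ disjoint small disks of radius $\varepsilon$ on $M_{\mathbb{C}}$, obtained as the images under $z\mapsto(z,z^{2},\dots,z^{d})$ of disks around adversarially chosen points $z_{i}^{(1)},\dots,z_{i}^{(N)}\in\mathbb{C}$; the $mN$ base points are chosen pairwise distinct and pairwise separated by a distance much larger than $\varepsilon$. The goal is to show that any fan $F_{q}$ equipartitioning every $\mu_{i}$ forces its associated polynomial $p(z)=\sum_{k=1}^{d}\bar{a}_{k}z^{k}-\bar{b}$ to vanish at each $z_{i}^{(j)}$, producing $mN$ distinct zeros of $p$ and hence $d\ge mN=m\lfloor(q-1)/2\rfloor$.

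The local computation at each disk is as follows. Under $u\mapsto\langle u,\mathbf{a}\rangle_{\mathbb{C}}-\bar{b}$, the $j$-th disk of $\mu_{i}$ is sent to a small neighborhood of $p(z_{i}^{(j)})\in\mathbb{C}$. If $p(z_{i}^{(j)})=0$, the first-order expansion $p(z)\approx p'(z_{i}^{(j)})(z-z_{i}^{(j)})$ makes the pushforward a (scaled and rotated) uniform disk about the origin, which by rotational symmetry and the $q$-fold symmetry of $F_{q}$ contributes exactly $\tfrac{1}{Nq}$ to every sector. If instead $p(z_{i}^{(j)})\ne 0$ and $z_{i}^{(j)}$ is bounded away from every zero of $p$, then for $\varepsilon$ small enough the image disk lies entirely in a single sector, contributing its full mass $\tfrac{1}{N}$ there. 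Writing $S_{i}=\{j:p(z_{i}^{(j)})=0\}$ and $T_{i}=\{1,\dots,N\}\setminus S_{i}$, we have $|T_{i}|\le N<q$, so some sector receives no contribution from any $T_{i}$-disk. Equipartition of that sector then demands $|S_{i}|/(Nq)=1/q$, forcing $|S_{i}|=N$ and $T_{i}=\varnothing$. Summing over $i$ produces the required $mN$ distinct zeros of $p$.

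The main obstacle is the uniformity of the ``$T_{i}$-disk sits in one sector'' observation over the choice of $p$: a priori a polynomial $p$ of degree $\le d<mN$ could have a zero arbitrarily close to some $z_{i}^{(j)}\notin S_{i}$, in which case $|p(z_{i}^{(j)})|$ is comparable to $|p'(z_{i}^{(j)})|\varepsilon$ and the image disk straddles several sectors, breaking the clean counting. This is rescued by the adversarial spacing: for any $p$ of degree $\le d<mN$ there are at most $d$ zeros, so at most $d$ of the $mN$ base points can lie within $\varepsilon$ of a zero of $p$; the remaining $mN-d\ge 1$ base points are then a uniform positive distance from every zero of $p$, and for $\varepsilon$ small enough their image disks do lie cleanly in single sectors --- enough to run the counting argument on at least one $\mu_{i}$ and derive the required contradiction. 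The inequality $N<q$, which makes $N=\lfloor(q-1)/2\rfloor$ the correct choice, is precisely what guarantees the existence of a $T_{i}$-free sector; increasing $N$ beyond this can allow equipartition via finite-disk configurations (e.g.\ the zero of $p$ sitting on the boundary of one disk while a ray of $F_{q}$ bisects another) that no longer force all base points to be zeros.
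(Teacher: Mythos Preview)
Your overall strategy---place $N=\lfloor(q-1)/2\rfloor$ points per measure on the complex moment curve and force them onto the center hyperplane---is exactly the paper's. But the execution with a \emph{fixed} $\varepsilon$ has real gaps. First, the claim that an $S_i$-disk contributes exactly $1/(Nq)$ to every sector is only a first-order approximation; higher-order terms in $p$ destroy exact rotational symmetry of the pushforward. Second, a $T_i$-disk can straddle sectors even when $z_i^{(j)}$ is far from every zero of $p$: it suffices that $p(z_i^{(j)})$ lies near a boundary ray $\arg w=2\pi r/q$, a failure mode you never address. Third, your ``rescue'' is circular: the phrase ``for $\varepsilon$ small enough'' depends on $p$, but $\varepsilon$ must be chosen \emph{before} the equipartitioning fan (hence $p$) is produced. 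And even granting the rescue, running the count on ``at least one $\mu_i$'' yields only $N$ zeros of $p$, which is no contradiction with $d<mN$; you need the count to go through for \emph{every} $\mu_i$ simultaneously, and your rescue does not deliver that.

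The paper dissolves all three issues by a standard compactness/limiting argument (as in [27,30]): take $\varepsilon\to 0$, extract a convergent subsequence of equipartitioning fans, and reason about the limit fan against the \emph{point} measures. For point masses the clean observation is that the \emph{interior of the union of any two adjacent sectors} carries at most $2/q$ of each $\mu_i$; since $2/q<1/N$, no point of $C_i$ lies in any such interior. The complement of all these open double-sectors is precisely the center hyperplane (a point on a half-hyperplane is still interior to the two sectors it bounds), so all $mN$ moment-curve points lie on the center, giving $d\ge mN$. This two-adjacent-sector device is what makes the count robust to points landing on sector boundaries---exactly the situation your single-sector count cannot handle.
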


\begin{proof} If $m$ measures on $\mathbb{C}^d$ are equipartitioned by a complex regular $q$-fan, then the interior of the union of any two adjacent sectors contains at most $2/q$ of each total measure. Consider point collections $C_1,\ldots, C_m$, each consisting of $q'=\lfloor (q-1)/2\rfloor$ points of $M_{\mathbb{C}}$, and let $\mu_i^\varepsilon$ be the volume of the union of the $\varepsilon$-balls with centers the points of $C_i$. A standard limiting argument as in [27, 30] shows that if $d=\Delta_{\mathbb{C}}(q;1,m)$, then there must exist a complex regular $q$-fan for which the interior of the union of any two adjacent sectors again contains at most $2/q$ points of each $C_i$, hence none. Thus each point lies on the fan, and in fact in its center since the interior of a half-hyperplane is contained in the interior of the union of two adjacent sectors. Hence $d\geq q'm$, since a point of intersection of $M_{\mathbb{C}}$ and a complex hyperplane represents a root of a degree $d$ polynomial. \end{proof} 

\noindent We conjecture a similar lower bound for  $\Delta_{\mathbb{C}}(m;q_1,\ldots, q_k)$ for all $k\geq 1$:  

\begin{conjecture} Let $Q=\Pi_{j=1}^k q_j$, $q_j>2$. Then \begin{equation*} k\Delta_{\mathbb{C}}(m;q_1,\ldots, q_k) \geq m\lfloor (Q-1)/2 \rfloor \end{equation*} \end{conjecture}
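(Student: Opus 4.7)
The plan is to adapt the moment-curve argument of Proposition 4.1 to arbitrary $k$. Take $m$ disjoint collections $C_1, \ldots, C_m \subset M_{\mathbb{C}}$ of $Q' := \lfloor (Q-1)/2 \rfloor$ points each, and let $\mu_i^\varepsilon$ denote the volume measure on the union of $\varepsilon$-balls around the points of $C_i$. If $d = \Delta_{\mathbb{C}}(m; q_1, \ldots, q_k)$, then equipartitioning complex regular fan partitions exist for every $\varepsilon > 0$, and the standard compactness argument of [27, 30] yields in the limit a complex fan partition $F_{q_1}, \ldots, F_{q_k}$ that equipartitions the limiting point-mass measures, in which each $p \in C_i$ carries mass $1/Q'$. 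Granting that every such $p$ must lie on $\bigcup_{j=1}^{k} H_j$, the conjecture follows from
\[
mQ' \;\leq\; \sum_{j=1}^{k} \bigl|M_{\mathbb{C}} \cap H_j\bigr| \;\leq\; kd,
\]
since each complex hyperplane meets $M_{\mathbb{C}}$ in at most $d$ points (the roots of a degree-$d$ polynomial).

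To force $p \in \bigcup_j H_j$, let $S(p) \subseteq \{1, \ldots, k\}$ index those fans on whose half-hyperplanes $p$ lies off the corresponding central hyperplane, and assume provisionally $p \notin H_j$ for every $j$. Near such a $p$, each half-hyperplane of $F_{q_j}$ with $j \in S(p)$ coincides with a smooth real hyperplane through $p$. If $|S(p)| = 0$, then $p$ lies in the interior of a unique cell and contributes its full mass $1/Q' > 1/Q$ there, violating the equipartition budget since $Q' < Q/2$. If $|S(p)| = 1$, reflection symmetry of the $\varepsilon$-ball across the single real hyperplane splits $p$'s mass equally into two adjacent cells, each receiving $1/(2Q') > 1/Q$, again a contradiction. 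These steps directly generalize the proof of Proposition 4.1.

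The main obstacle is the case $|S(p)| \geq 2$. Here $p$'s mass distributes among $2^{|S(p)|}$ incident cells according to the dihedral angles between the local real hyperplanes, and because the fans' orientations are free parameters these angles can be configured so that no single cell receives more than its budget $1/Q$ from $p$; indeed, near-orthogonal configurations yield shares close to $1/(2^{|S(p)|} Q')$, which is bounded by $1/Q$ since $Q' \geq Q/4$ for all $Q \geq 3$. The pointwise local-balance argument therefore cannot rule out points lying on the intersection of two or more fans while avoiding every central hyperplane. Overcoming this obstacle would likely require a global argument: either a genericity/transversality refinement showing that the simultaneous angle constraints at all $mQ'$ points over-determine the $k(2d+1)$ real fan parameters (matching the $m(Q-1)$ real equipartition conditions encoded in the conjectured bound), or a direct equivariant-topological obstruction in the spirit of Section 7 for a representation accommodating the full multi-fan structure.
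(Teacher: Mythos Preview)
The paper does not prove this statement: it is explicitly labeled a \emph{conjecture}, motivated by the $k=1$ case handled in Proposition~4.1. There is therefore no proof in the paper to compare your attempt against.

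Your proposal is not a proof either, as you yourself acknowledge. The moment-curve argument you outline correctly reproduces the $k=1$ reasoning and handles the degenerate cases $|S(p)|\in\{0,1\}$, but the case $|S(p)|\geq 2$ is a genuine obstruction, and you have not overcome it --- you have only described what overcoming it might require. In particular, the observation that near-orthogonal local configurations can distribute a point's mass among $2^{|S(p)|}$ cells without any single cell exceeding $1/Q$ is exactly why the pointwise argument of Proposition~4.1 does not extend, and the suggested remedies (a global parameter count or an equivariant obstruction) are speculative rather than executed. So the write-up is a fair diagnosis of why the conjecture is open, but it should not be presented as a proof.
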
 

\subsection{Upper Bounds via Theorem 3.1} 
	
	By assuming the measures are real-valued, it follows that their transforms satisfy $c_{i,{\mathbf{-\epsilon}}}=\overline{c_{i,{\mathbf{\epsilon}}}}$. Moreover, $c_{i,0}=\mu_i(\mathbb{C}^d)/Q$ as before, so letting \begin{center} $ \lceil \Pi  \rceil: =\{\epsilon \in \oplus_{j=1}^k\mathbb{Z}_{q_j}-\{0\}$ with last non-zero coordinate $\epsilon_j \leq \lceil(q_j-1)/2\rceil \}$, \end{center}	
	it is easily seen that the equipartition of each $\mu_1,\ldots, \mu_m$ is the vanishing of each $c_{i,\epsilon}$ with $\epsilon\in \lceil \Pi \rceil$. Note that the $F_{q_1},\ldots, F_{q_k}$ must all be genuine and that their centers must be complex affine independent in this circumstance. The associated polynomial (3.3) is 
\begin{equation} f(b_1,\ldots, b_k)=\Pi_{\epsilon\in \lceil \Pi \rceil} (\epsilon_1b_1+\ldots +\epsilon_kb_k)^m \end{equation} 

\noindent As a simple application, one has 

\begin{proposition} \begin{equation} \Delta_{\mathbb{C}}(9;1,1)=4 \end{equation} 
\end{proposition}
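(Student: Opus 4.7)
The lower bound $\Delta_{\mathbb{C}}(9;1,1)\geq 4$ is immediate from Proposition 4.1 with $m=1$ and $q=9$, since $\lfloor(9-1)/2\rfloor = 4$. The task is therefore to produce the matching upper bound, i.e., to show that any single (real) measure on $\mathbb{C}^4$ admits an equipartition by a complex regular $9$-fan. My plan is to derive this as a direct application of Theorem 3.1 in the case $k=1$, $q_1=9$, $d=4$.

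First I would identify the relevant set of Fourier transforms that must vanish. Since the measure is real-valued, the discussion preceding (4.2) shows that equipartition by a complex regular $9$-fan is equivalent to the annihilation of $c_{1,\epsilon}$ for every $\epsilon \in \lceil\Pi\rceil$. For $G=Z_9$ this set is $\{1,2,3,4\}$, so the representation to balance is $\rho = \chi_1\oplus\chi_2\oplus\chi_3\oplus\chi_4 : Z_9 \rightarrow U(4)$, and the polynomial of (4.2) (equivalently (3.3)) becomes
\begin{equation*}
f(b_1) \;=\; b_1\cdot 2b_1\cdot 3b_1\cdot 4b_1 \;=\; 24\,b_1^4 \;\in\; \mathbb{Z}[b_1]/(9b_1).
\end{equation*}

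The next step is simply to verify the hypothesis of Theorem 3.1, namely that $f(b_1)\notin \mathcal{I}=(b_1^{d+1})=(b_1^5)$. Since $9b_1=0$ forces $9b_1^k=0$ for every $k\geq 1$, the coefficient of $b_1^4$ is well-defined only modulo $9$; here $24\equiv 6 \pmod 9$, and $6\not\equiv 0 \pmod 9$. Hence $f(b_1) = 6 b_1^4$ has a non-trivial $b_1^4$ component and therefore cannot lie in the ideal generated by $b_1^5$. Theorem 3.1 then produces, for any complex (in particular any real) measure $\mu_1$ on $\mathbb{C}^4$, a non-trivial complex regular $9$-fan with $c_{1,\epsilon}=0$ for $\epsilon=1,2,3,4$. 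Combined with $c_{1,-\epsilon}=\overline{c_{1,\epsilon}}=0$ and $c_{1,0}=\mu_1(\mathbb{C}^4)/9$, this equipartitions the measure into nine sectors of equal mass, yielding $\Delta_{\mathbb{C}}(9;1,1)\leq 4$.

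There is essentially no substantive obstacle; the only point requiring care is the reduction modulo $9b_1$ and the observation that this is different from reducing $f$ as an integer polynomial (one must not, for example, try to reduce the ``constant'' coefficient modulo $9$). Once that is handled correctly the argument is a one-line unpacking of Theorem 3.1 together with the lower bound of Proposition 4.1.
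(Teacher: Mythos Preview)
Your proposal is correct and follows essentially the same approach as the paper's proof: compute the polynomial (4.2) as $f(b_1)=4!\,b_1^4=24\,b_1^4\equiv 6\,b_1^4$, observe that it does not lie in $(b_1^5)$, invoke Theorem 3.1 for the upper bound, and cite Proposition 4.1 for the lower bound. The paper's version is simply the one-line compression of what you wrote out in full.
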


\begin{proof} $f(b_1)=4!\cdot b_1^4\not \in (b_1^5) \subset \mathbb{Z}_9[b_1]$, so $\Delta_{\mathbb{C}}(9;1,1)\leq 4$. Thus $\Delta_{\mathbb{C}}(9;1,1)=4$ by (4.1).
\end{proof}

\begin{remark} It is perhaps interesting to observe that, since $6 \equiv 0 \pmod{3}$, Proposition 4.2 is obtained as a cohomology class representing a zero divisor of $\mathbb{Z}_q$ which is zero in $\mathbb{Z}_p$  for any prime $p$ dividing $q$.  Along with Propositions 5.1 and 6.1 below and the recent values $\Delta(2^{n+2} +1, 2) = 3\cdot 2^{n+1}+2$ of [31] (representing $2\in\mathbb{Z}_4$, but not obtained either via ideal-valued cohomological index theory or characteristic classes), these are the first equipartitions to be obtained in such fashion.\end{remark}
	
	For the elementary abelian groups $Z_p^k$, $p$ an odd prime, the polynomial (4.2) has coefficients in the field $\mathbb{Z}_p$. Resulting upper bounds on $\Delta_{\mathbb{C}}(p;m,k)$ then strongly parallel Theorem 4.1 of [17], obtained from $Z_2^k$-cohomological index theory with $\mathbb{Z}_2$-coefficients, which is still the best known general result on $\Delta(m,k)$. As there, one has a naturally related Dickson polynomial \begin{equation} D(p,k) =  \Det\begin{pmatrix} b_1 &  \ldots & b_k \\
                            b_1^p & \ldots & b_k^p \\
                            \vdots &  & \vdots \\
                            b_1^{p^{k-1}} &\ldots & b_k^{p^{k-1}}  
\end{pmatrix} = \sum_{\sigma \in S_k} \sgn(\sigma) b_{\sigma(1)}b_{\sigma(2)}^p \cdots b_{\sigma(k)}^{p^{k-1}}\end{equation}

\begin{theorem}  $\Delta_{\mathbb{C}}(p;m,k) \leq d$ if $D(p,k)^{m(p-1)/2} \not\in (b_1^{d+1},\ldots, b_k^{d+1})$ if $p$ is an odd prime.

\end{theorem}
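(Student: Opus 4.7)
\emph{Proof proposal.} The plan is to reduce Theorem 4.1 to Theorem 3.1 by showing that the polynomial $f$ of (4.2) is, up to a nonzero scalar in $\mathbb{Z}_p$, precisely $D(p,k)^{m(p-1)/2}$. Writing $\ell_\epsilon(b) := \epsilon_1 b_1 + \ldots + \epsilon_k b_k$ for each nonzero $\epsilon \in \mathbb{Z}_p^k$, it suffices to establish
$$f(b_1, \ldots, b_k) = \prod_{\epsilon \in \lceil \Pi \rceil} \ell_\epsilon(b)^m = C \cdot D(p,k)^{m(p-1)/2}$$
in $\mathbb{Z}_p[b_1, \ldots, b_k]$ for some $C \in \mathbb{Z}_p^\times$, since the ideal-membership hypothesis in $\mathcal{I} = (b_1^{d+1}, \ldots, b_k^{d+1})$ is then equivalent for the two sides and Theorem 3.1 applies directly.

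First I would verify the degree count: each of the $(p^k - 1)/(p-1)$ lines through the origin in $\mathbb{Z}_p^k$ contributes exactly $(p-1)/2$ of its $p-1$ nonzero points to $\lceil \Pi \rceil$ -- namely those scalar multiples whose last nonzero coordinate lies in $\{1, \ldots, (p-1)/2\}$ -- so $|\lceil \Pi \rceil| = (p^k - 1)/2$ and $\deg f = m(p^k-1)/2 = \deg D(p,k)^{m(p-1)/2}$. Second, I would invoke the classical Dickson identity
$$\prod_{\epsilon \in \mathbb{Z}_p^k \setminus \{0\}} \ell_\epsilon(b) = \alpha \cdot D(p,k)^{p-1}, \qquad \alpha \in \mathbb{Z}_p^\times,$$
justified by noting that both sides have the same degree $p^k - 1$ and vanish on exactly the same union of $\mathbb{Z}_p$-rational hyperplanes, so that unique factorization in $\mathbb{Z}_p[b_1,\ldots,b_k]$ forces agreement up to a nonzero scalar. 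Finally, I would exploit the involution $\epsilon \mapsto -\epsilon$ (under which $\ell_\epsilon \mapsto -\ell_\epsilon$ and which exchanges $\lceil \Pi \rceil$ with its complement in $\mathbb{Z}_p^k \setminus \{0\}$) to get
$$\Bigl(\prod_{\epsilon \in \lceil \Pi \rceil} \ell_\epsilon\Bigr)^2 = (-1)^{(p^k-1)/2} \prod_{\epsilon \neq 0} \ell_\epsilon = \alpha' \cdot D(p,k)^{p-1},$$
and take square roots in the UFD $\mathbb{Z}_p[b_1,\ldots,b_k]$ (both sides factor as products of linear forms) to conclude $\prod_{\epsilon \in \lceil \Pi \rceil} \ell_\epsilon = C \cdot D(p,k)^{(p-1)/2}$. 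Raising to the $m$-th power then yields the desired identity for $f$.

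The main technical ingredient is the Dickson identity, which is classical and parallels the $\mathbb{Z}_2$-coefficient computation in Theorem 4.1 of [17]; the sign and scalar bookkeeping is absorbed harmlessly into $C \in \mathbb{Z}_p^\times$. The only conceptual care needed is checking that the ``last nonzero coordinate $\leq (p-1)/2$'' convention really does select one representative from each $\{\epsilon, -\epsilon\}$ pair, which is immediate from $p$ odd, and that this choice is compatible with the realness condition $c_{i,-\epsilon} = \overline{c_{i,\epsilon}}$ already exploited in (4.2).
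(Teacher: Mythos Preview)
Your proposal is correct and reaches the same destination as the paper---showing that $f$ equals a nonzero $\mathbb{Z}_p$-scalar times $D(p,k)^{m(p-1)/2}$ and then invoking Theorem 3.1---but the route to that identity differs. The paper's one-line argument uses directly the characterization (cited from Wilkerson [29]) that $D(p,k)$ is itself the product of all $\ell_\epsilon$ with last nonzero coordinate equal to $1$; since every $\epsilon\in\lceil\Pi\rceil$ is uniquely $c\cdot\epsilon'$ with $c\in\{1,\ldots,(p-1)/2\}$ and $\epsilon'$ of this normalized form, one reads off immediately $\prod_{\epsilon\in\lceil\Pi\rceil}\ell_\epsilon=\bigl[(\tfrac{p-1}{2})!\bigr]^{(p^k-1)/(p-1)}D(p,k)^{(p-1)/2}$, with an explicit unit constant. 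Your argument instead passes through the full-product Dickson identity $\prod_{\epsilon\neq0}\ell_\epsilon=\alpha\,D(p,k)^{p-1}$, uses the involution $\epsilon\mapsto-\epsilon$ to square $\prod_{\lceil\Pi\rceil}\ell_\epsilon$, and then extracts a square root in the UFD. This works (in a UFD, $P^2=uQ^2$ with $u$ a unit forces $P$ and $Q$ to be associates), but it is more circuitous and loses the explicit constant. The paper's approach is shorter and more transparent; yours has the minor virtue of needing only the coarser form of the Dickson identity rather than the precise product description of $D(p,k)$.
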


\begin{proof} $D(p,k)$ is the product of all non-zero $\epsilon_1b_1+\ldots +\epsilon_kb_k$ whose last non-zero coordinate is 1 (see the proof of Proposition 1.1 of [29]), so  $f(b_1,\ldots, b_k)=[(\frac{p-1}{2})!]^m D(p,k)^{m(p-1)/2}$. \end{proof}

\noindent In particular, $\Delta_{\mathbb{C}}(p;m,1)\leq m(p-1)/2$ (see also [24]), so by (4.1) 

\begin{corollary} 
\begin{equation} \Delta_{\mathbb{C}}(p;m,1)=m(p-1)/2\end{equation} for all odd primes $p$. 
\end{corollary}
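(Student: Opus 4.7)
The proof is a direct combination of Proposition 4.1 and Theorem 4.1, each specialized to the single-fan case $k=1$. I do not expect any genuine obstacle; the main task is simply to note which specializations occur and check a trivial ideal membership.

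First, for the lower bound, I would invoke (4.1). Since $p$ is an odd prime, $\lfloor (p-1)/2\rfloor = (p-1)/2$, so Proposition 4.1 yields $\Delta_{\mathbb{C}}(p;m,1) \geq m(p-1)/2$ with no further work.

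Second, for the matching upper bound, I would specialize Theorem 4.1 to $k=1$. In this setting the Dickson polynomial (4.3) is the $1\times 1$ determinant $D(p,1) = b_1$, so the cohomology class required to survive is
\[
D(p,1)^{m(p-1)/2} \;=\; b_1^{m(p-1)/2} \;\in\; \mathbb{Z}_p[b_1].
\]
This monomial fails to lie in the ideal $(b_1^{d+1})$ precisely when $m(p-1)/2 \leq d$. Taking $d = m(p-1)/2$ then furnishes the upper bound $\Delta_{\mathbb{C}}(p;m,1) \leq m(p-1)/2$, and combining with the lower bound closes the proof.

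The only point worth checking is the triviality of the Dickson determinant in the $k=1$ case and the corresponding monomial non-vanishing in $\mathbb{Z}_p[b_1]/(b_1^{d+1})$; both are immediate. The substantive content of the corollary therefore sits entirely inside Proposition 4.1 (moment-curve lower bound) and Theorem 4.1 (Chern-class upper bound via Theorem 3.1), and the corollary itself is their specialization at $k=1$.
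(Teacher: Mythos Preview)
Your proposal is correct and matches the paper's own argument exactly: the lower bound is read off from (4.1) with $\lfloor (p-1)/2\rfloor=(p-1)/2$, and the upper bound comes from Theorem~4.1 at $k=1$, where $D(p,1)=b_1$ and $b_1^{m(p-1)/2}\notin (b_1^{d+1})$ for $d=m(p-1)/2$. The only minor slip is that the Dickson polynomial is equation~(4.4) in the paper, not~(4.3).
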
 

	For $k>1$, our best results  occur when $k=2$ (as is true for $\Delta(m,k)$). Note that one has the exact value $\Delta_{\mathbb{C}}(p;m,2)=m(p^2-1)/4$ in the following, provided the conjectured lower bound holds.  	
	
\begin{corollary} Let $\sum_{i=1}^na_ip^i$ be the base $p$ expansion of $m(p-1)/2$, $p$ an odd prime. If  each $a_i$ is even, then
\begin{equation} \Delta_{\mathbb{C}}(p;m,2)\leq m(p^2-1)/4 \end{equation}
\end{corollary}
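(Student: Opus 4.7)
The plan is to verify the hypothesis of Theorem 4.1 for $k=2$ and $d=m(p^2-1)/4$. By that theorem (and the computation in Theorem 4.1, which reduces $f$ to $[((p-1)/2)!]^m\,D(p,2)^{m(p-1)/2}$, the factorial prefactor being a unit in $\mathbb{Z}_p$), it suffices to show
\begin{equation*}
D(p,2)^{m(p-1)/2} \notin (b_1^{d+1},b_2^{d+1}) \subset \mathbb{Z}_p[b_1,b_2].
\end{equation*}
Directly from (4.3), $D(p,2) = b_1 b_2^{p} - b_1^{p} b_2$. Setting $N:=m(p-1)/2$ and expanding binomially,
\begin{equation*}
D(p,2)^{N}=\sum_{j=0}^{N}(-1)^{N-j}\binom{N}{j}\,b_1^{pN-(p-1)j}\,b_2^{\,N+(p-1)j}.
\end{equation*}

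Next I would carry out the degree bookkeeping that pins down where to look. A monomial $b_1^{\alpha}b_2^{\beta}$ lies outside $\mathcal{I}=(b_1^{d+1},b_2^{d+1})$ precisely when $\alpha\leq d$ and $\beta\leq d$. Adding these inequalities forces $(p+1)N=\alpha+\beta\leq 2d$, i.e.\ $d\geq (p+1)N/2=m(p^2-1)/4$. At the critical value $d=m(p^2-1)/4$ both inequalities must be tight, so there is a unique candidate index, namely $j=N/2$, contributing the monomial $b_1^{d}b_2^{d}$.

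The remaining step, and the only point where the hypothesis enters, is to check that the coefficient $\binom{N}{N/2}$ is nonzero in $\mathbb{Z}_p$. This is where Lucas' theorem does the work: write $N=\sum_{i\geq 0} a_i p^i$ with each $a_i$ even and $0\leq a_i<p$. Since each $a_i/2$ satisfies $0\leq a_i/2<p$, the base-$p$ expansion of $N/2$ is $\sum_i (a_i/2)\,p^i$, and Lucas gives
\begin{equation*}
\binom{N}{N/2}\equiv\prod_i\binom{a_i}{a_i/2}\pmod{p},
\end{equation*}
which is nonzero mod $p$ since each $a_i<p$ makes $\binom{a_i}{a_i/2}$ a unit. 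Therefore $b_1^{d}b_2^{d}$ appears in $D(p,2)^{N}$ with nonzero coefficient modulo $p$, so $D(p,2)^{N}\notin\mathcal{I}$, and Theorem 4.1 yields $\Delta_{\mathbb{C}}(p;m,2)\leq m(p^2-1)/4$.

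I do not anticipate a serious obstacle: the only subtle point is recognizing that the parity hypothesis is exactly the Lucas condition for $\binom{N}{N/2}\not\equiv 0 \pmod p$, and that this binomial coefficient is precisely the one that matters because the degree constraints single out $j=N/2$ uniquely. This parallels the role played by a Dickson-type calculation in the $\mathbb{Z}_2$-setting of Theorem 4.1 of [17].
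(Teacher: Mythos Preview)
Your argument is correct and follows essentially the same route as the paper: reduce via Theorem~4.1 to showing $D(p,2)^{m(p-1)/2}\notin(b_1^{d+1},b_2^{d+1})$, observe that the total degree forces the unique relevant monomial to be $b_1^{d}b_2^{d}$ with coefficient $\binom{m'}{m'/2}$ where $m'=m(p-1)/2$, and then apply Lucas's theorem using the parity hypothesis on the digits. You spell out a few steps (the binomial expansion, the degree bookkeeping, and why $\sum_i(a_i/2)p^i$ is the base-$p$ expansion of $m'/2$) more explicitly than the paper does, but the logic is the same.
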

\begin{proof} 

Let $m'=m(p-1)/2$. The sum of the exponents of any monomial in $D(p,2)^{m'}=(b_1b_2^p-b_2b_1^p)^{m'}$ is $m(p^2-1)/2$, so we seek one of degree $m(p^2-1)/4$. The unique such monomial is  $b_1^{m(p^2-1)/4}b_2^{m(p^2-1)/4}$, and by Lucas's theorem (see, e.g., [9]) this coefficient is $\binom{m'}{m'/2}=\Pi_{i=1}^n\binom{a_i}{a_i/2}\neq 0$.
\end{proof} 

	 For example, any two measures on $\mathbb{C}^4$ can be equipartitioned by a pair of complex regular 3-fans (actually, $3\leq \Delta_{\mathbb{C}}(3;2,2)\leq 4$, where the lower bound comes from dimension considerations), and any measure on $\mathbb{C}^6$ can be equipartitioned by a pair of complex regular 5-fans. Note that $\Delta_{\mathbb{C}}(p; 2(p^{n+1}-1)/(p-1),2)\leq \frac{p+1}{2}\cdot (p^{n+1}-1)$ follows by setting each $a_i=p-1$. In particular, $\Delta_{\mathbb{C}}(3; 3^{n+1}-1,2)\leq 2\cdot (3^{n+1} -1)$, which can again be compared to the optimal value $\Delta(2^{n+1}-1,2)\leq \frac{3}{2}\cdot (2^{n+1} -1)$ of [17].   
 
\section{Modulo Equipartitions of a Makeev Type}

	When $G\neq Z_p^k$, $p$  prime, the polynomial arising from $\rho=m\oplus_{\epsilon\neq 0} \chi_\epsilon$ vanishes whenever $m\geq 1$ unless $m=1$ and $G=Z_4$, while for $m$ odd the polynomial (4.2) from $\rho=m\oplus_{\epsilon\in \lceil \Pi\rceil} \chi_\epsilon$ vanishes whenever $m\geq 1$ unless $m=1$ and $G=Z_4$ or $Z_9$, so there is no hope of equipartition via Theorem 3.1 in these cases. 

	Nonetheless, a variety of other equipartitions follow by annihilating different prescribed transforms. We highlight one such class of results. Supposing that $q_j=p_jr_j$,  annihilating each $c_{i,\epsilon}$ in (3.2) except when $\epsilon \in \oplus_{j=1}^kp_j\mathbb{Z}_{q_j}$ yields an equipartition of each $\mu_i$ ``modulo" the subgroup $H=Z_{p_1}\times \ldots \times Z_{p_k}$: \begin{equation} \mu_i(\mathcal{R}_{hg})=\mu_i(\mathcal{R}_g) \end{equation} for each $g\in G$ and each $h\in H$, since the remaining $\chi_\epsilon$ are trivial on $H$. Thus there  is a collection $F_{q_1},\ldots,F_{q_k}$ of complex regular $q_j$-fans (necessarily distinct and in general position), each of whose $r=\Pi_{j=1}^kr_j$ sub-collections of regular $p_j$-fans $F_{p_1}\subset F_{q_1},\ldots, F_{p_k}\subset F_{q_k}$ equipartitions each measure. Such partitions are similar in spirt to those of Makeev [16] and their generalizations [3], in which there exist $n$ orthogonal hyperplanes, any $k$ of which equipartition a given set of measures. 
				
\begin{theorem} [Complex Makeev] Let $q_1=pr_1, \ldots, q_k=pr_k$, $p$ an odd prime, let $r=\Pi_{j=1}^k r_j$, and let $D(p,k)\in\mathbb{Z}_p[b_1,\ldots, b_k]$ be the Dickson polynomial (4.4). If $D(p,k)^{rm(p-1)/2}\notin (b_1^{d+1},\ldots, b_k^{d+1})$, then for any $m$ measures on $\mathbb{C}^d$ there exists $k$ complex regular $p$-fans $F_{q_1}, F_{q_2},\ldots, F_{q_k}$, each of whose $r$ sub-collections of $k$ regular $p$-fans equipartitions each measure.
\end{theorem}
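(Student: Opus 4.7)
The plan is to apply Theorem 3.1 with the representation whose Fourier-coefficient annihilation encodes the ``Makeev modulo $H$'' condition spelled out immediately before the theorem statement, and then to verify the polynomial hypothesis by reducing modulo $p$ and matching the result against the given Dickson polynomial.

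First I would record the translation into Fourier language. Identifying $H \cong Z_p^k$ with the product of the order-$p$ subgroups $r_j\mathbb{Z}_{q_j} \subset \mathbb{Z}_{q_j}$, the character $\chi_\epsilon$ is trivial on $H$ iff $p \mid \epsilon_j$ for every $j$, so the invariance (5.1) is exactly the vanishing of $c_{i,\epsilon}$ for all $\epsilon$ in $E := \bigoplus_j \mathbb{Z}_{q_j} \smallsetminus \bigoplus_j p\mathbb{Z}_{q_j}$. Given this vanishing, the measure of any sub-$p$-fan sector (a union of $r$ cells $\mathcal{R}_g$ indexed by $\tau_j \in \{0,\ldots,r_j-1\}$) expands via (3.2) as $c_{i,0}\prod_j r_j = \mu_i(\mathbb{C}^d)/p^k$ plus contributions indexed by $\epsilon \in \bigoplus_j p\mathbb{Z}_{q_j} \smallsetminus \{0\}$, each of which picks up a factor $\sum_{\tau_j=0}^{r_j-1} \zeta_{q_j}^{\epsilon_j \tau_j}$ that vanishes whenever $\epsilon_j \neq 0$ with $p \mid \epsilon_j$; hence every sub-collection of regular $p$-fans equipartitions each measure. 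Reality of the measures, as in the derivation of (4.2), lets us annihilate $c_{i,\epsilon}$ only on a half-set $\lceil\Pi\rceil_{\mathrm{Mak}} \subset E$ consisting of one representative from each $\pm$-pair.

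Next I would invoke Theorem 3.1 with $\rho = m\bigoplus_{\epsilon \in \lceil\Pi\rceil_{\mathrm{Mak}}} \chi_\epsilon$. The associated polynomial (3.3) becomes
\begin{equation*}
f(b_1,\ldots,b_k) = \prod_{\epsilon \in \lceil\Pi\rceil_{\mathrm{Mak}}} (\epsilon_1 b_1 + \cdots + \epsilon_k b_k)^m,
\end{equation*}
and the task reduces to showing $f \notin (b_1^{d+1},\ldots,b_k^{d+1})$ in $\mathbb{Z}[b_1,\ldots,b_k]/(q_1 b_1,\ldots,q_k b_k)$. Since $q_j b_j = pr_j b_j$, reduction modulo $p$ is a ring homomorphism into $\mathbb{Z}_p[b_1,\ldots,b_k]$ sending $\mathcal{I}$ into $(b_1^{d+1},\ldots,b_k^{d+1})$, so it is enough to prove $f \bmod p \notin (b_1^{d+1},\ldots,b_k^{d+1})$ in $\mathbb{Z}_p[b_1,\ldots,b_k]$.

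Finally I would identify $f \bmod p$ with $D(p,k)^{rm(p-1)/2}$ via two fibrations. The reduction $\epsilon \mapsto \bar\epsilon := \epsilon \bmod p$ restricts to a surjection $E \twoheadrightarrow \mathbb{Z}_p^k \smallsetminus \{0\}$ with every fibre of size $r$, since the fibre over $0$ is precisely the excluded $\bigoplus_j p\mathbb{Z}_{q_j}$; passing to $\pm$-quotients therefore gives an $r$-to-$1$ map $\lceil\Pi\rceil_{\mathrm{Mak}} \twoheadrightarrow (\mathbb{Z}_p^k \smallsetminus \{0\})/\pm$. Fibering further over $\mathbb{P}^{k-1}(\mathbb{Z}_p)$ with fibres $\mathbb{Z}_p^*/\pm$ represented by $\{1,\ldots,(p-1)/2\}$, pulling scalars out of the linear forms, and recognising $\prod_{[L]}(L^* \cdot b) = D(p,k)$ (as in the proof of Theorem 4.1) yields
\begin{equation*}
f \bmod p \;=\; \Bigl[\bigl(\tfrac{p-1}{2}\bigr)!\Bigr]^{rm(p^k-1)/(p-1)} \, D(p,k)^{rm(p-1)/2}.
\end{equation*}
The factorial is a unit in $\mathbb{Z}_p$, so the hypothesis $D(p,k)^{rm(p-1)/2} \notin (b_1^{d+1},\ldots,b_k^{d+1})$ transfers directly to $f \bmod p$, whence to $f$ in the original ring, and Theorem 3.1 supplies the desired partition. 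The main delicacy is the two-step fibre count $\lceil\Pi\rceil_{\mathrm{Mak}} \twoheadrightarrow (\mathbb{Z}_p^k \smallsetminus \{0\})/\pm \twoheadrightarrow \mathbb{P}^{k-1}(\mathbb{Z}_p)$; once that bookkeeping is in hand, the match with $D(p,k)$ is the same combinatorial manipulation used for Theorem 4.1.
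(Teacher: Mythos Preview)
Your proposal is correct and follows essentially the same route as the paper: define the half-set $\lceil\Pi\rceil_{\mathrm{Mak}}$ (the paper's $\lceil\Pi\rceil_r$) of characters outside $\bigoplus_j p\mathbb{Z}_{q_j}$, apply Theorem~3.1 with $\rho = m\bigoplus_{\epsilon\in\lceil\Pi\rceil_r}\chi_\epsilon$, and reduce the associated polynomial modulo $p$ to identify it as a unit times $D(p,k)^{rm(p-1)/2}$. Your write-up is more explicit about the $r$-to-$1$ fibre count and about why annihilating these coefficients forces each sub-$p$-fan sector to have measure $\mu_i(\mathbb{C}^d)/p^k$, but the paper handles the latter in the paragraph preceding the theorem (via the invariance condition~(5.1)) and simply asserts the former, so there is no genuine divergence in method.
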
 

\begin{proof}  For $G=\Pi_{j=1}^kZ_{q_j}$, let $\lceil \Pi \rceil _r = \{\epsilon \notin \oplus_{j=1}^kp\mathbb{Z}_{q_j}$ with last non-zero coordinate $\epsilon_j\leq \lceil(q_j-1)/2\rceil\}$. The associated polynomial (3.3) is $f=g^m$, where $g=\Pi_{\epsilon\in \lceil \Pi \rceil_r}(\epsilon_1b_1+\ldots +\epsilon_kb_k)$, so $f$ is a non-zero constant multiple of $D(p,k)^{rm(p-1)/2}$ when reduced mod $p$.\end{proof} 

\noindent One has the following corollaries for odd primes $p$ as in the non-modulo cases: 

\begin{corollary} For $q=pr$, any $m$ measures on $\mathbb{C}^{m(q-r)/2}$ can be equipartitioned by each of the $r$ regular $p$-fans of some complex regular $q$-fan. \end{corollary}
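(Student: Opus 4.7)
The plan is to obtain the corollary as the $k=1$ specialization of Theorem 5.1. Setting $k=1$ and $q_1 = q = pr$ in that theorem, the hypothesized group is $G = Z_q$, and the conclusion produces a single complex regular $q$-fan $F_q$ whose $r$ sub-regular-$p$-fans $F_p \subset F_q$ each equipartition each of the $m$ given measures on $\mathbb{C}^d$, provided the Dickson-polynomial condition $D(p,1)^{rm(p-1)/2} \notin (b_1^{d+1})$ holds.

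Next I would evaluate the relevant Dickson polynomial in this case. By definition (4.4), $D(p,k)$ is a $k \times k$ determinant, which for $k=1$ collapses to the single entry $D(p,1) = b_1$. Consequently the hypothesis of Theorem 5.1 reduces to the purely degree-theoretic assertion $b_1^{rm(p-1)/2} \notin (b_1^{d+1})$ in $\mathbb{Z}_p[b_1]$, which is equivalent to the inequality $rm(p-1)/2 \leq d$.

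Finally, I would unwind the arithmetic: since
\begin{equation*}
\tfrac{rm(p-1)}{2} = \tfrac{m(rp-r)}{2} = \tfrac{m(q-r)}{2},
\end{equation*}
the choice $d = m(q-r)/2$ satisfies the inequality with equality, so $b_1^d$ is a nonzero element of $\mathbb{Z}_p[b_1]$ outside $(b_1^{d+1})$. Invoking Theorem 5.1 in this dimension yields the desired fan $F_q$, which must be a non-trivial (genuine) complex fan partition since an equipartition is impossible if the fan were centered at infinity.

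I do not expect any real obstacle here: once Theorem 5.1 is in hand, this corollary is a direct specialization, the only content being the identification $D(p,1) = b_1$ and the arithmetic rewrite $rm(p-1)/2 = m(q-r)/2$.
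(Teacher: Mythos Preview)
Your proposal is correct and matches the paper's intended argument: the paper does not write out a proof but simply notes that the corollary follows from Theorem~5.1 ``as in the non-modulo cases,'' i.e., by the same specialization $k=1$, $D(p,1)=b_1$ used to derive Corollary~4.1 from Theorem~4.1. Your arithmetic rewrite $rm(p-1)/2 = m(q-r)/2$ is exactly the missing line.
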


\begin{corollary} Let $mr(p-1)/2=\sum_{i=1}^n a_i p^i$, where each $a_i$ is even. Then any $m$ measures on $\mathbb{C}^{mr(p^2-1)/4}$ can be equipartitioned by each of the $r=r_1r_2$ regular $p$-fans contained in some pair of complex regular fans $F_{pr_1}$ and $F_{pr_2}$.  
\end{corollary}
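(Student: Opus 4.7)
The plan is to apply Theorem 5.1 in the case $k=2$. With $q_1=pr_1$ and $q_2=pr_2$, the dimension appearing in Theorem 5.1 is governed by whether $D(p,2)^{m'} \notin (b_1^{d+1},b_2^{d+1})$, where $m' = rm(p-1)/2$ and we wish to take $d = mr(p^2-1)/4 = m'(p+1)/2$. Thus it suffices to exhibit a non-vanishing monomial of bidegree exactly $(d,d)$ in the expansion of $D(p,2)^{m'} = (b_1b_2^p - b_2b_1^p)^{m'} \in \mathbb{Z}_p[b_1,b_2]$.

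The total degree of every monomial in $D(p,2)^{m'}$ is $m'(p+1) = mr(p^2-1)/2 = 2d$, so I would be looking for the ``middle'' term. Expanding by the binomial theorem gives
\begin{equation*}
D(p,2)^{m'} = \sum_{j=0}^{m'} (-1)^{m'-j} \binom{m'}{j} b_1^{\,pm' + (1-p)j}\, b_2^{\,m' + (p-1)j},
\end{equation*}
and setting the $b_2$-exponent equal to $m'(p+1)/2$ forces $j = m'/2$ uniquely (the $b_1$-exponent then also equals $m'(p+1)/2$ automatically). Hence the monomial $b_1^d b_2^d$ appears in $D(p,2)^{m'}$ with coefficient $(-1)^{m'/2}\binom{m'}{m'/2}$, and the task reduces to showing this binomial coefficient is nonzero modulo $p$.

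Here I would invoke Lucas's theorem exactly as in the proof of Corollary 4.2: writing $m' = \sum_i a_i p^i$ with each $a_i$ even by hypothesis, the integer $m'/2$ has base-$p$ digits $a_i/2$, and so $\binom{m'}{m'/2} \equiv \prod_i \binom{a_i}{a_i/2} \pmod{p}$. Since $0 \le a_i/2 \le a_i < p$, each factor is a nonzero element of $\mathbb{Z}_p$, giving $\binom{m'}{m'/2} \not\equiv 0 \pmod p$ as required. Applying Theorem 5.1 then delivers the desired modulo equipartition.

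The routine parts are the binomial expansion and the Lucas computation; the only point requiring care, and thus the main (minor) obstacle, is verifying that the bidegree $(d,d)$ monomial is indeed unique and that $d = m'(p+1)/2$ is an integer so that $j = m'/2$ makes sense — this uses the parity hypothesis on $a_1$ (so that $m'$ itself is even, since $a_0=0$ in the stated expansion starting from $i=1$).
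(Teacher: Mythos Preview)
Your proposal is correct and is precisely the approach the paper intends: Corollary 5.2 is stated without its own proof, the phrase ``as in the non-modulo cases'' indicating that one simply reruns the argument of Corollary 4.2 with $m'=rm(p-1)/2$ in place of $m(p-1)/2$ and then invokes Theorem 5.1 instead of Theorem 4.1. Your write-up is in fact more explicit than the paper's, spelling out the binomial expansion and the uniqueness of the middle monomial; the only quibble is that the evenness of $m'$ follows directly from all $a_i$ being even and $p$ odd, not specifically from $a_0=0$.
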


For example, although $\Delta_{\mathbb{C}}(15;m,1)\geq 7m$, Corollary 5.1 shows that any $m$ measures on $\mathbb{C}^{5m}$ (respectively, $\mathbb{C}^{6m}$), can be equipartitioned by each of the 5 regular 3-fans (respectively, 3 regular 5-fans) of some complex regular 15-fan. By Corollary 5.2, any two measures on $\mathbb{C}^{16}$ can be equipartitioned by each of the four pairs of regular 3-fans of a pair of complex regular 6-fans.\\

 We give one final Makeev-type result, noteworthy in that $q_1\neq q_2$ for the equipartitioning fans $F_{q_1}$ and $F_{q_2}$. On the other hand, owing to multiplication mod 9, the dimension is large compared to the conjectured lower bound $\Delta_{\mathbb{C}}(1;9,2)\geq 20$: 

\begin{proposition} For any measure on $\mathbb{C}^{27}$, there exists a pair of complex regular 9-fans $F_9^1$ and $F_9^2$ such that $F_9^1$ and each regular 3-fan of  $F_9^2$ equipartitions the measure.
\end{proposition}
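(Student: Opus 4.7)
The plan is to apply Theorem 3.1 to $G = Z_9 \times Z_9$ after recasting the desired property as the vanishing of a prescribed set of Fourier coefficients. For a joint partition by regular $9$-fans $F_9^1, F_9^2$, write $f(g_1, g_2) := \mu(\mathcal{R}_{(g_1, g_2)})$. Each sub-$3$-fan $F_3^\alpha \subset F_9^2$ ($\alpha = 0, 1, 2$) has sectors formed by merging three consecutive $g_2$-sectors of $F_9^2$, so the joint partition $(F_9^1, F_3^\alpha)$ equipartitions $\mu$ iff $\sum_{j=0}^{2} f(g_1, \alpha + 3t + j) = \mu(\mathbb{C}^d)/27$ for all $g_1$ and $t$. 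Requiring this for every $\alpha$ is equivalent to the pair of conditions: (1) $F_9^1$ alone equipartitions, i.e., $\nu(g_1) := \sum_{g_2} f(g_1, g_2)$ is constant, and (2) $f$ is invariant under $H := \{0\} \times 3\mathbb{Z}_9 \subset G$. In Fourier terms, (1) annihilates $c_{(\epsilon_1, 0)}$ for $\epsilon_1 \in \{1, 2, 3, 4\}$, while (2) annihilates $c_{(\epsilon_1, \epsilon_2)}$ for $\epsilon_2 \in \{1, 2, 4\}$ and all $\epsilon_1 \in \mathbb{Z}_9$ (canonical representatives modulo Hermitian symmetry). These $4 + 27 = 31$ characters assemble into the representation $\rho$.

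By Theorem 3.1, it then suffices to verify that
\[ f(b_1, b_2) \;=\; 4!\, b_1^4 \cdot \prod_{\epsilon_2 \in \{1, 2, 4\}} \prod_{\epsilon_1 \in \mathbb{Z}_9} (\epsilon_1 b_1 + \epsilon_2 b_2) \]
is not contained in $(b_1^{28}, b_2^{28}) \subset \mathbb{Z}[b_1, b_2]/(9 b_1, 9 b_2)$. Pulling out $\epsilon_2^9$ from each inner product, the inner double product equals $8^9 \cdot P(b_1, b_2)^3$ where $P(b_1, b_2) := \prod_{\alpha \in \mathbb{Z}_9}(b_2 + \alpha b_1)$; since $24 \cdot 8^9 \equiv 24 \cdot (-1) \equiv 3 \pmod 9$, one obtains $f \equiv 3\, b_1^4\, P^3 \pmod 9$.

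The crux is computing $P^3 \pmod 9$. Pairing $\alpha$ with $9-\alpha$ in $\prod_{\alpha=1}^{8}(b_2 + \alpha b_1)$ collapses each pair to $b_2^2 - \alpha^2 b_1^2$ modulo $9$, and the pair $\{3, 6\}$ contributes only $b_2^2$ (as $9 \equiv 0$). Thus $P \equiv b_2^3 \cdot Q \pmod 9$ with $Q := b_2^6 + 6 b_1^2 b_2^4 + 3 b_1^4 b_2^2 - b_1^6$. Writing $Q = M + 3N$ with $M = b_2^6 - b_1^6$, one has $Q^3 - M^3 = 9(M^2 N + 3 M N^2 + 3 N^3) \equiv 0 \pmod 9$, so $Q^3 \equiv (b_2^6 - b_1^6)^3 \pmod 9$. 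Substituting back,
\[ f \;\equiv\; 3\, b_1^4 b_2^9 (b_2^6 - b_1^6)^3 \;\equiv\; 3\, b_1^4 b_2^{27} - 3\, b_1^{22} b_2^9 \pmod 9, \]
and the monomial $b_1^4 b_2^{27}$, whose coefficient $3$ is nonzero in $\mathbb{Z}_9$, lies outside $(b_1^{28}, b_2^{28})$. Theorem 3.1 then delivers the desired equipartition on $\mathbb{C}^{27}$. The main obstacle is the nested mod-$9$ computation — especially the collapse $Q^3 \equiv (b_2^6 - b_1^6)^3$, which hinges on observing that every correction term of $Q$ is a multiple of $3$, so that the binomial cross-terms acquire an extra factor of $3$ and vanish mod $9$.
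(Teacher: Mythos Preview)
Your proof is correct and follows the paper's strategy of applying Theorem~3.1 after identifying a suitable set of Fourier coefficients to annihilate, but you make a more economical choice than the paper does. The paper kills every $c_\epsilon$ with $\epsilon\in\lceil\Pi\rceil$ except $(0,3),(3,3),(6,3)$ --- that is $37$ coefficients --- and obtains the degree-$37$ polynomial $6\,b_1^{10}b_2^9(b_1^{18}-b_2^{18})$. You instead kill only the $31$ coefficients actually forced by the statement (namely $c_{(\epsilon_1,0)}$ for $\epsilon_1\in\{1,2,3,4\}$ and $c_{(\epsilon_1,\epsilon_2)}$ for $\epsilon_2\in\{1,2,4\}$, $\epsilon_1\in\mathbb{Z}_9$), correctly observing that invariance of $f$ under $\{0\}\times 3\mathbb{Z}_9$ together with the equipartition of $F_9^1$ alone is equivalent to the desired conclusion. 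Your resulting polynomial $3\,b_1^4b_2^9(b_2^{18}-b_1^{18})$ has lower degree, and indeed the monomial $-3\,b_1^{22}b_2^9$ already lies outside $(b_1^{23},b_2^{23})$, so your argument actually yields the stronger bound $d=22$ in place of $d=27$. The mod-$9$ computations (pairing $\alpha\leftrightarrow 9-\alpha$, the collapse $Q^3\equiv(b_2^6-b_1^6)^3$ via $Q=M+3N$) are all sound.
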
  

\begin{proof} We annihilate each $c_\epsilon$ with $\epsilon \in \lceil \Pi \rceil$ and $\epsilon\neq (0,3), (3,3), (6,3)$. The expansion is $\mu\left(\mathcal{R}_{(\zeta_9^{k_1},\zeta_9^{k_2})}\right) = c_0 + 2Re(c_{(0,3)}\zeta_3^{k_2}) + 2Re(c_{(3,3)}\zeta_3^{k_1+k_2})+2Re(c_{(3,6)}\zeta_3^{k_1+2k_2})$, and the corresponding polynomial \begin{center} $f(b_1,b_2)=6b_1^{10}b_2^9(b_1^2-b_2^2)^3(b_1^2-4b_2^2)^3(b_1^2-16b_2^2)^3=6b_1^{10}b_2^9(b_1^{18}-b_2^{18})\in \mathbb{Z}_9[b_1,b_2]$ \end{center} does not lie in $(b_1^{28},b_2^{28})$. As the regions determined by $F_9^1$ and and the regular 3-fans of $F_9^2$ are exactly $\mathcal{S}=\cup_{j=0}^2\mathcal{R}_{(\zeta_9^{k_1},\zeta_9^{k_2+j})}, 0\leq k_1,k_2< 9$, summing the expansion over $0\leq j\leq 2$ gives $\mu(\mathcal{S})=\mu(\mathbb{C}^{27})/27$. 
\end{proof} 

 \section{Equipartitions by Pairs of Cubical Wedges}

	For a non-abelian example, we consider the quaternion group $Q_8=\{\pm1,\pm i, \pm j, \pm k\}$. One has a corresponding canonical partition of the Quaternions $\mathbb{H}\cong\mathbb{R}^4$ by cones $V_g=\cup_{r\geq 0}rC_g$ on the faces (i.e., cubes) $C_g=\{w\in P \mid \langle w, g\rangle_{\mathbb{R}}=1\}$ which form the boundary of the 8-cell (4-cube) $P=\{w\in\mathbb{H}\mid \langle w, g\rangle_{\mathbb{R}}\leq 1\, \forall\, g\in Q_8\}$ (see, e.g., [7]). As in [24], ensuing partitions of $\mathbb{H}^d$ are by ``quaternionic cubical wedges'' $\{\mathcal{W}\}_{g\in Q_8}$ centered about quaternionic hyperplanes. These can be expressed explicitly by $\mathcal{W}_g=\{\mathbf{u}\in \mathbb{H}^d\mid \, (\exists v\in V_g) \, \, \langle \mathbf{u},\mathbf{a}\rangle_{\mathbb{H}}-\bar{b} = v\}$ for each $(\mathbf{a},b) \in S(\mathbb{H}^{d+1})$, where $\langle \mathbf{u},\mathbf{a}\rangle_{\mathbb{H}}=\sum_{i=1}^n u_i\bar{a}_i$ is the standard quaternion-valued inner product. . 
		
	The representation theory of $Q_8$ is well-known: 1-dimensional representations given by the compositions $\chi_\epsilon: Q_8\rightarrow Q_8/\{\pm 1\}\cong Z_2^2\rightarrow U(1)$, $\epsilon\in\mathbb{Z}_2\oplus \mathbb{Z}_2$, and the 2-dimensional representation $\sigma: Q_8\hookrightarrow S(\mathbb{H}) \cong SU(2)$. Thus the Fourier expansion for any wedge decomposition and a given complex measure $\mu=\mu_1+i\mu_2$ is 
	\begin{equation} \mu(\mathcal{W}_g) = \sum_{\epsilon\in\mathbb{Z}_2^2}c_\epsilon \chi_\epsilon(g) + 2\, \Trace(c_\sigma \sigma(g)) \end{equation}
		
	As with finite abelian groups,  the cohomology of $Q_8$ (given in Section 7.1.2) precludes the equipartition of arbitrary $\mu$ by annihilating all transforms except $c_{(0,0)}$. Nonetheless,  all transforms but $c_{(0,0)}$ and $c_{(1,0)}$ can be made to vanish if $d\geq 3$, in which case $\mu(\mathcal{W}_g) = \frac{1}{8}\mu(\mathbb{H}^d)+ c_{(1,0)}$ for $g\in Z_4=\{\pm1,\pm i\}$ and $\mu(\mathcal{W}_g) = \frac{1}{8}\mu(\mathbb{H}^d)-c_{(1,0)}$ for $g\in jZ_4=\{\pm j, \pm k\}$:
	
\begin{proposition} Any two measures $\mu_1,\mu_2$ on $\mathbb{H}^3$ can be equipartitioned modulo $Z_4$ by a quaternionic cubical wedge partition $\{\mathcal{W}_g\}_{g\in Q_8}$. Thus \begin{equation} \mu_\ell(\mathcal{R}_{r,s})=\mu_\ell(\mathbb{H}^3)/4 \end{equation} for each $\ell=1,2$ and each union of wedges $\mathcal{R}_{r,s}=\mathcal{W}_{i^r}\cup \mathcal{W}_{i^sj}$, $0\leq r,s<4$.   \end{proposition}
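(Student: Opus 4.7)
The plan is to apply the $(\rho, Q_8)$-Ham Sandwich framework of Section 2 to annihilate every Fourier transform in the expansion (6.1) except $c_{(0,0)}$ and $c_{(1,0)}$, applied to the complex measure $\mu = \mu_1 + i\mu_2$.

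First I verify that this annihilation yields the stated equipartition. The character $\chi_{(1,0)}$ factors through $Q_8 \twoheadrightarrow Q_8/\{\pm 1\} \cong Z_2^2 \to U(1)$ and, up to relabeling of the $Z_2^2$-factors, is identically $+1$ on $Z_4 = \{\pm 1, \pm i\}$ and identically $-1$ on $jZ_4 = \{\pm j, \pm k\}$. After the annihilation, (6.1) collapses to $\mu(\mathcal{W}_g) = \tfrac{1}{8}\mu(\mathbb{H}^3) \pm c_{(1,0)}$, with the sign determined by the coset of $g$. Summing over any union $\mathcal{R}_{r,s} = \mathcal{W}_{i^r} \cup \mathcal{W}_{i^s j}$ cancels the $\pm c_{(1,0)}$ contributions and gives $\mu(\mathcal{R}_{r,s}) = \tfrac{1}{4}\mu(\mathbb{H}^3)$; taking real and imaginary parts recovers (6.2) for each $\mu_\ell$.

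Next, the problem is reduced to finding $(\mathbf{a}, b) \in S(\mathbb{H}^4)$ at which the transforms $c_{(0,1)}, c_{(1,1)} \in \mathbb{C}$ and $c_\sigma \in M(\mathbb{C}, 2)$ simultaneously vanish. As prescribed after Definition 2.1, this is the $(\rho, Q_8)$-balancing associated with the $6$-complex-dimensional representation $\rho = \chi_{(0,1)} \oplus \chi_{(1,1)} \oplus 2\sigma$, the multiplicity $2$ on $\sigma$ accounting for the matrix structure of $c_\sigma$. In direct parallel with the reduction of Theorem 3.1 to Proposition 7.1, the existence of such a point follows once one shows that no $Q_8$-equivariant map $S(\mathbb{H}^4) \to \rho$ avoids the origin, equivalently that the Euler class of the associated vector bundle over $S(\mathbb{H}^4) \times_{Q_8} EQ_8$ survives.

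The main obstacle is thus the explicit cohomological computation in $H^*(BQ_8)$. For $d = 3$ the dimensional budget is favorable, since $\dim_{\mathbb{R}} S(\mathbb{H}^4) = 15$ exceeds $\operatorname{rank}_{\mathbb{R}} \rho = 12$. By the Whitney sum formula the top Chern class factors as
\[
c_6(\rho) \;=\; c_1(\chi_{(0,1)}) \cdot c_1(\chi_{(1,1)}) \cdot c_2(\sigma)^2 \;\in\; H^{12}(BQ_8; \mathbb{Z}),
\]
where the first two factors come via the quotient $Q_8 \to Z_2^2$ and $c_2(\sigma)$ is the standard four-dimensional generator associated with the quaternionic representation $\sigma$. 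The critical step, using the ring presentation recalled in Section 7.1.2, is to verify that this product restricts non-trivially to $H^{12}(S(\mathbb{H}^4) \times_{Q_8} EQ_8; \mathbb{Z})$. Since $12 \le 15$, this survives the truncation imposed by the sphere provided the product itself is non-vanishing in $H^{12}(BQ_8)$ --- a finite check in the known $Q_8$-cohomology ring which, in the spirit of Remark 1, may rely on a class that is a zero divisor integrally while remaining nonzero modulo $2$.
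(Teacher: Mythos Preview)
Your approach is exactly the paper's: annihilate all transforms of the complex measure $\mu=\mu_1+i\mu_2$ except $c_{(0,0)}$ and $c_{(1,0)}$ via the representation $\rho=\chi_{(0,1)}\oplus\chi_{(1,1)}\oplus 2\sigma$, reduce to a Borsuk--Ulam statement, and compute the top Chern class of the bundle over the quotient. Your verification that the resulting expansion collapses to the claimed equipartition, and your dimensional bookkeeping (the class lives in degree $12\le 15$, so it survives restriction from $BQ_8$), are both fine.

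The gap is that you stop precisely where the content lies: you do not carry out the ``finite check'' that $c_6(\rho)\neq 0$ in $H^{12}(BQ_8;\mathbb{Z})$, and your suggestion for how it might go is in fact wrong. In the presentation $H^*(BQ_8;\mathbb{Z})=\mathbb{Z}[\alpha,\beta,\gamma]/(2\alpha,2\beta,8\gamma,\alpha^2,\beta^2,\alpha\beta-4\gamma)$ with $\alpha=c_1(\chi_{(1,0)})$, $\beta=c_1(\chi_{(0,1)})$, $\gamma=c_2(\sigma)$, one has $c_1(\chi_{(1,1)})=\alpha+\beta$ and hence
\[
c_6(\rho)=(\alpha+\beta)\,\beta\,\gamma^2=(\alpha\beta+\beta^2)\gamma^2=\alpha\beta\,\gamma^2=4\gamma^3.
\]
This is nonzero in $H^{12}(BQ_8;\mathbb{Z})\cong\mathbb{Z}_8$, but it \emph{vanishes} modulo $2$. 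So a mod~$2$ reduction of the top class cannot detect it; the argument must be done integrally (the mod~$2$ step the paper uses is only to identify $\gamma$ as a generator via $w_4(\sigma)\neq 0$, not to detect $c_6(\rho)$ itself). Once you insert this computation, your proof coincides with the paper's.
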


	For comparison, any two measures on $\mathbb{C}^6\cong\mathbb{H}^3$ can also be equipartitioned mod $Z_4<Z_8$, i.e., by each regular 4-fan composing a complex regular 8-fan (by annihilating each $c_\epsilon$, $\epsilon\neq 0,4$, for a given complex measure). 
	
\section{Proofs of Theorem 3.1 and Proposition 6.1}

	We follow the \textit{configuration-space/test-map paradigm} [32], the established method for the topological reduction of problems in combinatorial and discrete geometry. For a $n$-tuple of complex measures $\mu=(\mu_1,\ldots,\mu_n)$, evaluating the  $(\rho,G)$-average of the given $\rho:G\rightarrow U(n)$ produces a continuous (test-) map $\mathcal{F}_\mu: X\rightarrow \mathbb{C}^n$, where the (configuration-) space $X$ is a free $G$-manifold which includes all the regions of all the non-trivial $G$-decompositions. Crucially, this map is $G$-equivariant, so that the balancing of these measures, represented by a zero of this map, is guaranteed by a Borsuk-Ulam type result (Proposition 7.1).  		
	
\begin{proof}  The discussion in Section 3 shows that $[S(\mathbb{C}^{d+1})]^k$ realizes all possible regions of all possible fan partitions $F_{q_1},\ldots, F_{q_k}$ of $\mathbb{C}^d$, including those at infinity. To ensure continuity, however, we remove from each coordinate sphere $S(\mathbb{C}^{d+1})$ the copy of $Z_{q_j}$ lying in $0 \times S^1$. Thus $X=\Pi_{j=1}^k X_j$, $X_j=S(\mathbb{C}^{d+1})- Z_{q_j}$. As before, $G$ acts freely on $X$, and each $G$-orbit of $x=(\textbf{a}_1,b_1,\ldots, \textbf{a}_k,b_k)\in X$ determines the sets $\mathcal{R}_g (x)= \{\mathbf{u}\in\mathbb{C}^d\mid (\forall \, 1\leq j \leq k)\,(\exists v_j \in R_{r_j}(q_j))\,\langle \mathbf{u},\mathbf{a}_j \rangle_{\mathbb{C}} + \bar{b}_j = v_j \}$, including all the regions of all non-trivial fan partitions. For $G=Q_8$ we let $X=S(\mathbb{H}^{d+1})-(0\times Y)$, where $Y=\cup_{g\in Q_8} \partial C'_g$,  $C'_g$ being the intersection of $S^3$ with the cone $V_g$ of Section 6. Again, the sets $\mathcal{R}_g(x)=\{\mathbf{u}\in \mathbb{H}^d\mid \, (\exists v\in V_g) \langle \mathbf{u},\mathbf{a}\rangle_{\mathbb{H}}+\bar{b} = v\}$ for $x=(\mathbf{a},b)$ include all the non-trivial cubical wedge decompositions, and $G$ acts freely on $X$ as before. 
	
		For Theorem 3.1, we seek to balance $\mu=(\mu_1,\ldots,\mu_n)$ on $\mathbb{C}^d$ by $\rho: G\rightarrow U(n)$, while for Proposition 6.1 we seek to balance the 6-tuple $\mu=(\mu_1,\mu_1,\mu_1, 0,0,\mu_1)$ by $\rho=\chi_{(1,1)}\oplus\chi_{(0,1)}\oplus 2\sigma: Q_8\rightarrow U(6)$, where $\mu_1$ is a single complex measure on $\mathbb{H}^3$.  In either case, we define the map $\mathcal{F}_\mu: X\rightarrow \mathbb{C}^n$ by \begin{equation} \mathcal{F}_\mu(x)=\frac{1}{|G|}\sum_{g\in G} \rho_g^{-1}(\mu(\mathcal{R}_g(x)) \end{equation} For finite abelian $G$, the exclusion of each $Z_{q_j}$ guarantees that each $F_{q_j}(x)=\{\mathbf{u}\in\mathbb{C}^d\mid (\exists\, 0\leq r < q_j)\, \langle \mathbf{u},\mathbf{a}_j \rangle_{\mathbb{C}} + \bar{b}_j = \zeta_{q_j}^r\}$ is either a complex regular $q_j$-fan (if $\mathbf{a}_j\neq0$) or the empty set (if $\mathbf{a}_j=0$), and therefore that each  $\partial\mathcal{R}_g (x): = \{\mathbf{u}\in\mathbb{C}^d\mid (\exists \, 1\leq j \leq k)\,(\exists v_j \in \partial R_{r_j}(q_j))\,\langle \mathbf{u},\mathbf{a}_j \rangle_{\mathbb{C}}+ \bar{b}_j = v_j \}$ has measure zero. A dominated convergence argument as in the proof of the Ham Sandwich Theorem in [18] or in [24] shows that $x\mapsto \mu(\mathcal{R}_g(x))$ is continuous. The $Q_8$-case  is proven similarly [24]. 
		
		As $\mathcal{R}_{g_1}(g_2x)=\mathcal{R}_{g_1g_2}(x)$ for all $x\in X$ and all $g_1,g_2\in G$, the map $\mathcal{F}_\mu$ is $G$-equivariant, i.e, $\mathcal{F}_\mu(g\cdot x) = \rho_g(\mathcal{F}_\mu (x))$ for all $x\in X$ and $g\in G$, so by Proposition 7.1 below there exists some $x\in X$ with $\mathcal{F}_\mu(x)=0$. Noting that it suffices to assume $\mu\neq 0$ (otherwise any non-trivial partition will do), we show lastly that the $G$-orbit of such an $x$ determines a non-trivial partition. For Theorem 3.1, one has $x\in (0\times S^1)^k$ otherwise, $b_j\notin Z_{q_j}$, so there exists some $g_0\in G$ for which $\mathcal{R}_g(x)= \emptyset$ if  $g\neq g_0$ and $\mathcal{R}_{g_0}(x)=\mathbb{C}^d$.  Hence $\mathcal{F}_\mu (x)=\rho_{g_0}^{-1}(\mu(\mathbb{C}^d))\neq 0$. Again, the $Q_8$-argument is identical [24]. \end{proof}

\begin{proposition} Let $X$ be the spaces in the proofs of Theorem 3.1 and Proposition 6.1 above, and let  $\rho: G\rightarrow U(n)$ be their respective representations, i.e., $\rho=\oplus_{r=1}^n \chi^{\epsilon_r}: \Pi_{j=1}^kZ_{q_j}\rightarrow U(n)$ from (3.3), and $\rho=\chi_{(1,1)}\oplus \chi_{(0,1)}\oplus 2\sigma: Q_8\rightarrow U(6)$. Then for any continuous $G$-equivariant map $h: X\rightarrow \mathbb{C}^n$, there exists some $x\in X$ such that $h(x)=0$. 
\end{proposition}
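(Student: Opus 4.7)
The plan is a standard Borel-construction obstruction argument via the top Chern class of the associated complex vector bundle. Assuming $h$ has no zero, I will produce a contradiction by showing the top Chern class $c_n(\xi)$ pulls back to something nonzero, exactly when $f\notin\mathcal{I}$.

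Since $G$ acts freely on $X$, the representation $\rho$ determines a complex $n$-plane bundle $\xi=X\times_G\mathbb{C}^n\to X/G$, and the equivariant map $h$ descends to a section $\bar h$ of $\xi$. A nowhere-zero $\bar h$ would split off a trivial line summand, forcing $c_n(\xi)=0$ in $H^{2n}(X/G;\mathbb{Z})$. Letting $\phi\colon X/G\to BG$ classify the principal $G$-bundle $X\to X/G$, one has $\xi=\phi^*(EG\times_G\mathbb{C}^n)$, hence $c_n(\xi)=\phi^* c_n(\rho^*EG)$. For abelian $\rho=\oplus_r \chi_{\epsilon_r}$, the Whitney sum formula together with the identification $c_1(L_{\epsilon_r})=\sum_j \epsilon_{r,j}b_j$ (where $b_j\in H^2(BZ_{q_j};\mathbb{Z})\cong\mathbb{Z}_{q_j}$ is the canonical generator) gives $c_n(\rho^*EG)=\prod_r\bigl(\sum_j\epsilon_{r,j}b_j\bigr)=f(b_1,\ldots,b_k)$ in the natural polynomial subring $\mathbb{Z}[b_1,\ldots,b_k]/(q_jb_j)\subset H^*(BG;\mathbb{Z})$.

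It therefore suffices to check that $f\notin\mathcal{I}$ implies $\phi^* f\neq 0$. For this, identify $X/G$ homotopically: each $X_j=S(\mathbb{C}^{d+1})-Z_{q_j}$ is $S^{2d+1}$ with a single free $Z_{q_j}$-orbit removed, so $Y_j:=X_j/Z_{q_j}$ is the lens space $L^{2d+1}(q_j)$ with one point removed, which deformation retracts onto the $2d$-skeleton $BZ_{q_j}^{(2d)}$. Thus $X/G\simeq \prod_j BZ_{q_j}^{(2d)}$ and $\phi$ is the product of skeletal inclusions. The long exact sequence of the pair $(BZ_{q_j},BZ_{q_j}^{(2d)})$ combined with cellular cohomology shows that the restriction $H^*(BZ_{q_j})\to H^*(BZ_{q_j}^{(2d)})$ is an isomorphism in degrees $\leq 2d$ and zero in higher degrees, with kernel $(b_j^{d+1})$; a Künneth argument restricted to the polynomial subring then identifies the kernel of $\phi^*$ with $\mathcal{I}=(b_1^{d+1},\ldots,b_k^{d+1})$, completing the abelian case. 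For $Q_8$, the same outline applies: $X/Q_8$ serves as an analogous ``punctured skeleton'' of $BQ_8$, and $c_6(\rho^*EQ_8)=c_1(L_{(1,1)})\,c_1(L_{(0,1)})\,c_2(E\sigma)^2$ is computed and shown nonzero in $H^{12}$ using the cohomology ring of $BQ_8$ described in Section 7.1.2, under the hypothesis $d\geq 3$.

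\medskip\noindent\textbf{Main obstacle.} The most delicate step is identifying the kernel of $\phi^*$ on the product with exactly $\mathcal{I}$. The single-factor skeletal computation is clean, but the Künneth formula over $\mathbb{Z}$ introduces Tor contributions when the $q_j$ share prime factors; these live outside the polynomial subring in which the Chern class actually sits, so they do not obstruct the argument, though this must be verified explicitly. The $Q_8$ case requires separate and more intricate bookkeeping in the non-polynomial ring $H^*(BQ_8)$.
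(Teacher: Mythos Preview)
Your proposal is correct and follows essentially the same route as the paper: the Borel-construction Chern-class obstruction, the Whitney-sum computation $c_n(\rho)=f(b_1,\ldots,b_k)$, the identification of $X_j/Z_{q_j}$ with the $2d$-skeleton of $BZ_{q_j}$, and the restriction to the tensor (polynomial) subring to sidestep the K\"unneth Tor terms are all exactly what the paper does in Section~7.1. For $Q_8$ the paper likewise shows $\overline{X}$ is homotopy equivalent to the $12$-skeleton of $BQ_8$ and computes $c_6(\rho)=(\alpha+\beta)\beta\gamma^2=4\gamma^3\neq 0$, matching your outline.
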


\begin{proof} We proceed in a standard fashion: quotienting $X\times \mathbb{C}^n$ by the diagonal $G$-action gives a  complex vector bundle $\mathbb{C}^n \hookrightarrow E:=X \times_G \mathbb{C}^n \rightarrow \overline{X}:=X/G$, and a zero of the section $s: \overline{X}\rightarrow E$ induced from $x\mapsto (x, h(x))$ is equivalent to a zero of $h$. As a non-vanishing section implies a zero top Chern class $c_n(E)\in H^{2n}(\overline{X};\mathbb{Z})$ (see, e.g., [19]), we show that $c_n(E)\neq 0$.

	 In each case, $E$ is the pullback of $\mathbb{C}^n\hookrightarrow E_\rho:=EG\times_G \mathbb{C}^d \rightarrow BG$ under the inclusion $i: \overline{X}\hookrightarrow BG$, where $EG$ and $BG$ are the total space and classifying space, respectively, of the universal bundle $G\hookrightarrow EG \rightarrow BG$ for the group $G$ (see, e.g., [12, 13]). We recall that $BG$ is unique up to homotopy. By naturality, the total Chern class $c(E)$ is $i^*(c(\rho))$, where $c(\rho):=c(E_\rho)$ is the ``total Chern class of the representation" [1]. In fact, $c_n(E)\neq0$ if $c_n(\rho)\neq0$ in the cases considered, so one is reduced to the calculation of  $c_n(\rho)\in H^*(BG;\mathbb{Z})$ given below.\end{proof} 

\subsection{Chern Class Calculations}

The explicit calculation of $c(\rho)$ for general $\rho: G\rightarrow U(n)$ can be very complicated (see, e.g., [8]), though here the computations are essentially classical, which we sketch nonetheless for the sake of completeness. Recall that for any paracompact space $B$, evaluating the first Chern class gives an isomorphism \begin{equation} c_1: Vect_{\mathbb{C}}^1(B) \stackrel{\cong}{\rightarrow} H^2(B;\mathbb{Z}),  \end{equation} where the space $Vect_{\mathbb{C}}^1(B)$ of all complex line bundles over $B$ is a group under tensor products. Thus the isomorphism (7.2) can be written as $c_1(E_1\otimes E_2)=c_1(E_1)+c_1(E_2)$ (see, e.g., [13]).

\subsubsection{$G=\Pi_{j=1}^kZ_{q_j}$} Recall that $BZ_q$ can be identified with the infinite-dimensional Lens space $L^\infty(q)=S(\mathbb{C}^\infty)/Z_q$, the union of the finite dimensional dimensional Lens spaces $L^{2d-1}(q)=S(\mathbb{C}^d)/Z_q$ given by the standard $Z_q$-action, and hence that $BG=BZ_{q_1}\times\ldots\times BZ_{q_k}$ can be seen as their product.

	For our space $\overline{X}=\Pi_{j=1}^k \overline{X}_j$, each $\overline{X}_j=L^{2d+1}(q_j)-pt$ deformation retracts onto the $2d$-skeleton of $BZ_{q_j}$. It will suffice to consider only the (far simpler, see, e.g., [6]) tensor-subrings of $H^*(BG)$ and $H^*(\overline{X})$, i.e., the images of $\otimes_{j=1}^n H^*(Z_{q_j})\rightarrow H^*(BG)$ and $\otimes_{j=1}^n H^*(\overline{X}_j)\rightarrow H^*(\overline{X})$  induced by projections (these are injections by the general K\"unneth formula [12]). Since $H^*(BZ_q)=\mathbb{Z}[b]/(qb)$, $b=c_1(\chi_1)$, $H^*_{tensor}(BG)=\mathbb{Z}[b_1, \ldots, b_k]/(q_1b_1,\ldots, q_kb_k)$, where $b_j=c_1(\chi_{e_j})$ and $e_j$ is  the $j$-th basis vector of $\oplus_{j=1}^n \mathbb{Z}_{q_j}$. By cellular cohomology, each restriction $i_j^*: H^*(BZ_{q_j}) \rightarrow H^*(\overline{X}_j)$ is an isomorphism in dimensions $d'\leq 2d$ and is the zero-map otherwise, so that $(b_1^{d+1}, \ldots, b_k^{d+1})$ is the  kernel of $i^*: H_{tensor}^*(BG)\rightarrow H_{tensor}^*(\overline{X})$.  On the other hand, $c(\rho)=c(\oplus_{r=1}^n \chi_{\epsilon_r})=\Pi_{r=1}^n(1+ c_1(\chi_{\epsilon_r}))$ by the Whitney sum formula [19]. As $\chi_{\epsilon_r} = \otimes_{j=1}^k \otimes^{\epsilon_{r,j}}\chi_{e_j}$, $c_1(\chi_{\epsilon_r})= \epsilon_{r,1}b_1+\ldots +\epsilon_{r,k}b_k$ by (7.2), and therefore $c_n(\rho)=\Pi_{r=1}^n (\epsilon_{r,1}b_1+\ldots +\epsilon_{r,k}b_k)=f(b_1,\ldots, b_k)$ is precisely the polynomial (3.3), which is not in $(b_1^{d+1},\ldots, b_k^{d+1})$ by assumption.  

\subsubsection{$G=Q_8$} $H^*(BQ_8;\mathbb{Z})= \mathbb{Z}[\alpha, \beta, \gamma]/\mathcal{I}$, where $\mathcal{I}=(2\alpha, 2\beta, 8\gamma, \alpha^2,\beta^2, \alpha\beta-4\gamma)$ and $|\alpha|=|\beta|=2$, $|\gamma|=4$ (see, e.g., [1]). One can identify $\alpha$ with $c_1(\chi_{(1,0)})$ and  $\beta$ with $c_1(\chi_{(0,1)})$, while that $c_2(\sigma)$ is a generator of $H^4(BQ_8;\mathbb{Z})\cong\mathbb{Z}_8$ follows by showing it to be non-zero mod 2, or in other words that the top Stiefel-Whitney class $w_4(\sigma)\in H^4(BQ_8;\mathbb{Z}_2)$ of the underlying real bundle of $E(\sigma)$ is non-zero. This was done in [21, formula 5.2]. As $\chi_{(1,1)}=\chi_{(1,0)}\otimes\chi_{(0,1)}$, it follows that $c_6(\rho)=(\alpha+\beta)\beta \gamma^2=4\gamma^3\neq0$. On the other hand, $X_0:=S^3-Y$ equivariantly deformation retracts onto $Q_8=\{\pm 1, \pm i, \pm j, \pm k\}\subset S^3$, so $\overline{X}=[S(\mathbb{H}^3) \ast X_0]/Q_8$ is homotopy equivalent to $[S(\mathbb{H}^3)/Q_8] \ast pt$, the 12-skeleton of $BQ_8$, and again $i^*: H^*(BQ_8)\rightarrow H^*(\overline{X})$ is an isomorphism in dimensions $d'\leq 12$. 

\section{Acknowledgments} 

This research was partially supported by ERC advanced grant 32094 during visits with Gil Kalai at the Hebrew University of Jerusalem, whom the author thanks for many helpful conversations.

\bibliographystyle{plain}

\end{document}